\newtheorem{theorem}{Theorem}
\newtheorem{lemma}[theorem]{Lemma}
\newtheorem{proposition}[theorem]{Proposition}
\newtheorem{corollary}[theorem]{Corollary}
\newtheorem{definition}{Definition}[section]
\newtheorem*{remark}{Remark}
\newtheorem*{Pf}{Proof}
\newcommand\beqa[1]{ \begin{eqnarray} \label{#1}}
\newcommand{\eeqa}{ \end{eqnarray} }
\newcommand{\beqano}{ \begin{eqnarray*} }
\newcommand{\eeqano}{ \end{eqnarray*} }
\newcommand{\R}{ {\mathbb R}   }
\newcommand{\Z}{ {\mathbb Z}   }
\newcommand{\C}{ {\mathbb C}   }
\newcommand{\defeq}{\coloneqq} 
\newcommand{\CC}{\mathbb{C}}       
\newcommand{\NN}{\mathbb{N}}  
\newcommand{\QQ}{\mathbb{Q}}       
\newcommand{\RR}{\mathbb{R}}       
\newcommand{\ZZ}{\mathbb{Z}}       
\newcommand{\TT}{\mathbb{T}}
\newcommand{\PP}{\widehat{\mathbb{C}}}
\newcommand{\ph}{\varphi}          
\newcommand{\be}{\beta}
\newcommand{\sig}{\sigma}
\renewcommand{\th}{\theta}
\newcommand{\ze}{\zeta}
\newcommand{\om}{\omega}
\newcommand{\Om}{\Omega}
\newcommand{\gC}{\mathscr C}       
\newcommand{\gH}{\mathscr H}       
\newcommand{\gO}{\mathscr O}
\newcommand{\col}{\colon}          
\newcommand{\demi}{\frac{1}{2}}
\newcommand{\dem}{\tfrac{1}{2}}
\newcommand{\ti}{\tilde}
\newcommand{\ens}{\enspace}
\newcommand{\IM}{\mathop{\Im m}\nolimits}
\newcommand{\RE}{\mathop{\Re e}\nolimits}
\newcommand{\ie}{\emph{i.e.}\ }
\newcommand{\cf}{\emph{cf.}}
\newcommand{\Imp}{\quad\Longrightarrow\quad}
\newcommand{\bc}{\beta_{_\CC}}
\newcommand{\ioi}{\int_0^1}
\newcommand{\thom}{(\theta, \omega)}
\newcommand{\tu}{\tilde{u}}
\newcommand{\dst}{\displaystyle}
\newcommand{\ov}{\overline}
\DeclarePairedDelimiter\abs{\lvert}{\rvert}%
\DeclarePairedDelimiter\norm{\lVert}{\rVert}%
\newcommand{\IN}{^{\textnormal{(i)}}}
\newcommand{\EX}{^{\textnormal{(e)}}}
\newcommand{\Chol}{\gC^1_{\textrm{hol}}}
\newcommand{\CK}{{\gC^1_{\textrm{hol}}(K,B)}}
\newcommand{\beglab}[1]{\begin{equation}	\label{#1}}
\newcommand{\edla}{\end{equation}}
\def\beal{\begin{aligned}}
\def\enal{\end{aligned}}
\renewcommand \th {\theta}
\renewcommand \phi {\varphi}
\def\om{\omega}
\def\~{\tilde}
\def\Bbb{\mathbb}
\def\R{\Bbb R}
\def\C{\Bbb C}
\def\Z{\Bbb Z}
\def \S {{\mathbb S}}
\def\th{\theta}
\begin{document}

\title{On the regularity of Mather's $\be$-function for standard-like twist maps}
\author[Carminati]{Carlo Carminati}
\address{Dipartimento di Matematica, Universit\`a di Pisa, Largo Bruno Pontecorvo 5, 56127 Pisa, Italy.}
\email{carlo.carminati@unipi.it}
\author[Marmi]{Stefano Marmi}
\address{Scuola Normale Superiore, CNRS Fibonacci Laboratory, Piazza dei Cavalieri 7, 56126 Pisa, Italy.}
\email{s.marmi@sns.it}
\author[Sauzin]{David Sauzin}
\address{CNRS IMCCE Paris Observatory
PSL University, 77 av. Denfert-Rochereau 75014 Paris, France.}
\email{david.sauzin@obspm.fr }
\author[Sorrentino]{Alfonso Sorrentino}
\address{Dipartimento di Matematica, Universit\`a degli Studi di Roma ``Tor Vergata'', Via della ricerca scientifica 1, 00133 Rome, Italy.}
\email{sorrentino@mat.uniroma2.it}

\date{\today}
\subjclass[2010]{}

\maketitle

\begin{abstract}
  We consider the minimal average action (Mather's $\be$ function)
  for area preserving twist maps of the annulus.  
  The regularity properties of this function share interesting
  relations with the dynamics of the system. 
  We prove that 
  the $\be$-function associated to a standard-like twist map admits
  a 
  unique $\gC^1$-holomorphic complex extension, which coincides
  with this function on the set of real diophantine frequencies.
\end{abstract}

\section{Introduction}

In this note we would like to investigate some regularity properties of the so-called  {\it Mather's $\be$-function} (or {\it minimal average action}) for twist maps of the annulus.
This object is related to the minimal {average} action of configurations
with a prescribed rotation number (the so-called {\it Aubry-Mather
  orbits}) and plays a crucial role in the study of the dynamics of twist
maps; see section \ref{AMtheory} for a more detailed
introduction.
In particular,  many intriguing  questions and conjectures related to  problems in dynamics, analysis and geometry have been translated into questions
 {about} this function and its regularity {properties} (see for
 example \cite{MS, Siburg, Sor14, SVeselov, SV} and references
 therein), shedding a new light on these issues and, in some cases, paving the way for their solution. \\

{Two of the main questions that underpin our current interest in the subject are the following:}

\noindent a) {\it Do  regularity properties of  $\be$-function (\ie differentiability, higher smoothness, etc.) allow one to infer any information on the dynamics of the system?}\\
%
%
b) {\it  To which extent does this function identify the system? Does it satisfy any sort of rigidity property?}\\

Despite the huge amount of attention that these questions  have attracted over the past years---in particular, understanding its regularity and its implications---they remain essentially open. 
In the twist map case, the best result known is that this map is strictly convex and differentiable at all irrationals. Moreover, differentiability at a rational number $p/q$ is a very atypical phenomenon: 
it corresponds to the existence of an invariant circle consisting of periodic orbits  whose rotation number is $p/q$ (see \cite{Mather90}).
An extension of these results to surfaces was provided in \cite{MS}.

Goal of this article is to address this regularity issue and provide
some new interesting answers in the special case of {\it standard-like
  maps}.  More specifically, our starting point is the paper
\cite{CMS} which establishes some
rigidity properties of the complex extension of analytic
parametrizations of KAM curves. We use the main result of \cite{CMS}
to build up a $\gC^1$-holomorphic complex function which coincides
with Mather's $\be$ function on the set of real diophantine
frequencies and we prove that this extension is unique. {See
  Theorem \ref{th:main} and Corollary \ref{cor:betaquasianalytic} for
  precise statements.}\\ 

{The article is organized as follows. In section \ref{AMtheory} we provide a brief introduction to Aubry-Mather theory and introduce the main object of investigation (Definition \ref{defbeta}). In section \ref{secmainresult} we state our main results (Theorem \ref{th:main} and Corollary \ref{cor:betaquasianalytic}), whose proofs will be detailed in section \ref{sec:pfThmmain}. Some auxiliary results will be described in section \ref{sec:intermed} and appendix \ref{appendix}.}\\ 

\subsection*{Acknowledgements}
 The authors acknowledge the support of the Centro di Ricerca Matematica Ennio de Giorgi and of UniCredit Bank R$\&$D group for financial support through the {\it Dynamics and Information Theory Institute} at the Scuola Normale Superiore. 
{CC, SM and AS acknowledge the support of MIUR PRIN Project {\it Regular and stochastic behaviour in dynamical systems} nr. 2017S35EHN.
AS acknowledges the support of the MIUR Department of Excellence grant
CUP E83C18000100006.}
DS thanks Fibonacci Laboratory for their hospitality.
CC has been partially supported by the GNAMPA group of the “Istituto Nazionale di Alta Matematica” (INdAM).
  \\

\section{ A Synopsis of Aubry--Mather theory for twist maps of the cylinder} \label{AMtheory}

At the beginning of 1980s Serge Aubry and John Mather developed, independently, a novel {and fruitful} approach to the study of monotone twist maps of the annulus, based on the so-called {\it principle of least action}, nowadays  commonly called {\it Aubry--Mather theory}.
{They} pointed out the existence of  {\it global action-minimizing orbits} for any given rotation number; these orbits  minimize the discrete Lagrangian action with fixed end-points  on all time intervals (for a more detailed introduction,  see  for example \cite{Bangert, MatherForni, Siburg, SorLecNotes}).\\

Let us consider the annulus $ \S^1 \times (a,b)$, where $\S^1:=\R/\Z$
and $a,b\in [-\infty,+\infty]$.  Let us consider a diffeomorphism
$f: \S^1 \times (a,b) \longrightarrow \S^1 \times (a,b)$ and its lift
to the universal cover $\R\times (a,b)$, that we will continue to
denote by $f$; we assume that ${f}(x+1,y) = {f}(x,y) + (1,0)$ for each
$(x,y) \in \R \times (a,b)$. 

In the case in which $a,b$ are both finite, we will assume that ${f}$ extends continuously to $\R\times [a,b]$ and that it preserves the boundaries, with the corresponding dynamics being rotations by some fixed angles $\omega_\pm$:
\begin{equation}\label{extensionrotation}
{f}(x,a) = (x+\omega_-,\; a) \quad {\rm and} \quad {f}(x,b) = (x+\omega_+, \;b).
\end{equation}
For simplicity, we set $\omega_{\pm}=\pm \infty$ if $a=-\infty$ or $b=+\infty$.\\

\begin{definition}
A map 
\begin{eqnarray*}
f: \R \times (a,b) &\longrightarrow& \R \times (a,b) \\
(x_0,y_0) &\longmapsto& (x_1,y_1)
\end{eqnarray*}
is called a {\em monotone twist map} if:
\begin{itemize}
\item[(i)] $f(x_0+1, y_0) = {f}(x_0, y_0) + (1,0)$;
\item[(ii)] $f$ preserves orientation and the boundaries of
  $\R\times (a,b)$, \ie 
$y_1(x_0,y_0) \xrightarrow[y_0\to a]{} a$ and
$y_1(x_0,y_0) \xrightarrow[y_0\to b]{} b$ uniformly in~$x_0$;
%
\item[(iii)] if $a$ or $b$ is finite, then $f$ can be continuously extended to the boundary by a rotation, as in (\ref{extensionrotation});
\item[(iv)] $f$ satisfies the {\it monotone twist
    condition}\footnote{%
    The twist condition can be geometrically described by saying that
    each vertical $\{x=x_0\}$ is mapped by~$f$ to a graph over the
    $x$-axis. In particular, for each $x_0$ and $x_1$, there exists a
    unique $y_1$ such that $(x_1,y_1)$ belongs to the image of $\{x=x_0\}$.}
$$\frac{\partial x_1}{\partial y_0} (x_0,y_0)>0 \quad 
\text{for all $(x_0,y_0)\in \R\times (a,b)$;}
$$  
\item[(v)] $f$ is {\it exact symplectic}, \ie 
there exists a function $h\col \R\times\R \to \R$
%
%
such that $h(x_0+m,x_1+m)=h(x_0,x_1)$ for all $m\in \ZZ$ and
$$
y_1\,dx_1 - y_0\,dx_0 = dh(x_0,x_1).
$$
\end{itemize}
The interval $(\omega_-, \omega_+)\subset \R$ is then called the {\it twist
  interval} of $f$
and any function~$h$ as above is called a generating function for~$f$.\\
\end{definition}

\begin{remark}
{Observe that  (iv) implies that one can use
  $(x_0,x_1)$ as independent variables instead of $(x_0,y_0)$, namely 
 if $(x_1,y_1) = f(x_0,y_0)$ then $y_0$ is uniquely determined. Moreover,}
the generating function $h$ allows one to reconstruct completely the dynamics of $f$; in fact,
it follows from {property~(v)} 
that:
\begin{equation} \label{genfuncttwistmap}
\left\{
\begin{aligned}
y_1 &= \tfrac{\partial h}{\partial x_1}(x_0,x_1)\\[1ex]
y_0 &= - \tfrac{\partial h}{\partial x_0}(x_0,x_1).
\end{aligned}
\right.
\end{equation}
{Observe that condition (iv) corresponds to asking that
$$
\frac{\partial^2 h}{\partial x_0 \partial x_1} <0.
$$
}
\end{remark}

\noindent{\bf Examples.} 
\begin{itemize}
\item[1.] The easiest example is the following (which is an example of {\it integrable} twist map):
$$
f(x_0,y_0)= (x_0 + \rho(y_0), y_0),
$$
where $\rho: (a,b) \longrightarrow \R$ and, in order to satisfy the
twist condition, it is strictly increasing, \ie $\rho'(y_0)>0$
for each $y_0\in (a,b)$. The dynamics is very easy: the space is
foliated by a family of invariant straight lines  $\{y=y_0\}$, on
which the dynamics is a translation by $\rho (y_0)$. Observe that if we
look at the projected map on the annulus $\S^1\times (a,b)$, we obtain
a family of invariant circles $\{y=y_0\}$ on which the map acts as a
rotation by $\rho(y_0).$ 

It is easy to check that a generating function 
is given by $h(x_0,x_1)= \sig(x_1-x_0)$ with any~$\sig$ such that
$\sig'$ is the inverse bijection of~$\rho$.\\
\item [2.] The {\it standard maps}. 
One of the simplest (yet, very challenging) non-integrable  twist map is the so-called {\it standard map} {(this name appeared for the first time in \cite{Chir})}:
$$
f_\varepsilon(x_0,y_0)=(x_1,y_1) \ens\text{with}\ens \left\{
\begin{aligned}
x_1 &= x_0+y_0+ \varepsilon \sin (2\pi x_0)\\
y_1 &= y_0+ \varepsilon \sin (2\pi x_0)
\end{aligned}
\right.
$$
where $\varepsilon >0$ is a parameter ($\varepsilon=0$ would correspond to an integrable map). 
It is easy to check that a generating function is given by 
$$
h_\varepsilon (x_0,x_1) = \frac{1}{2} (x_1-x_0)^2 - \frac{\varepsilon}{2\pi}  \cos (2\pi x_0).
$$
This map has been the subject of extensive investigation, both from an
analytical and numerical points of view. 
An interesting question
concerns what happens in the transition between integrability and
chaos; in particular, {can one determine} at which value of~$\varepsilon$ an invariant curve of
a given rotation number breaks down, or at which value  there are no
more invariant curves? {See for example \cite{Chir, MMP, MP, Mather84, MaS}} (although the
literature on the topics is vast). 

In section \ref{secmainresult} we will focus on a generalized version
of this map (see \eqref{eq:twist}), namely:
$$
T_g(x,y)=(x',y') \ens\text{with}\ens \left\{
\begin{aligned}
x'&=x+y+g(x) \\
y'&=y+g(x)
\end{aligned}
\right.
$$
with $g$ a 1-periodic, real analytic function of zero mean. {We will refer to this kind of map as {\it standard-like twist map}.}\\

\item [3.]  Another interesting example is provided by {\it Birkhoff
    billiards}. This dynamical model describes the motion of a point
  inside a planar strictly convex domain $\Omega$ with smooth
  boundary. The billiard ball moves with unit velocity and without
  friction following a rectilinear path; when it hits the boundary it
  reflects according to the standard reflection law: the angle of
  reflection is equal to the angle of incidence.  See \cite{Tabach}
  for a more detailed introduction. 

If one considers the arc-length parametrization of the boundary
$\partial \Omega$, then one can describe the billiard map as a map $B(s_0, -\cos(\varphi_0))=(s_1,-\cos(\varphi_1))$, where $s_{0,1}$ refer to the starting and hitting point on the boundary, while 
$\phi_{0,1} \in (0,\pi)$ are the starting and hitting directions of the trajectory, with respect to the positive tangent directions on the boundary. With respect to these coordinates $(x=s,y=-\cos\varphi)$ the billiard map is a monotone twist map.\\

\item [4.]  Let us consider 
\begin{eqnarray*}
H: \S^1\times \R \times \S^1 &\longrightarrow& \R\\
(x,y,t) &\longmapsto& H(x,y,t),
\end{eqnarray*}
a $C^2$ Hamiltonian which is strictly convex and superlinear in the
momentum variable (\ie $\partial^2_y H >0$ and 
$\lim_{|y|\rightarrow +\infty} \frac{H(x,y)}{|y|}=+\infty$); 
then its time-1 map flow $\Phi_H^1: S^1\times \R\longrightarrow S^1\times \R $ can be lifted to a monotone twist map on $\R\times \R$.
Such Hamiltonians are often called  {\it Tonelli Hamiltonian}; see
\cite{SorLecNotes}. 

Moser in \cite{Moser} proved that every twist diffeomorphism is the time one map associated to a suitable 
Tonelli Hamiltonian system.\\
\end{itemize}

\vspace{10 pt}

As follows from (\ref{genfuncttwistmap}), any orbit $\{(x_i,y_i)\}_{i\in\Z}$ of the monotone twist diffeomorphism $f$ is completely determined by the sequence $(x_i)_{i\in \Z}$. Moreover, this sequence corresponds to critical points  of the discrete {\it action functional}:
$$
\R^\Z \ni (x_i)_{i\in\Z} \longmapsto \sum_{i\in \Z} h(x_i, x_{i+1}),
$$
{where the series is to be interpreted as a formal object.}
{This means that $(x_i)_{i\in \Z}$ comes from an
  orbit of $f$ if and only if}
$$
\partial_2 h (x_{i-1},x_{i}) + \partial_1 h(x_i, x_{i+1}) =0 \qquad \mbox{ for all}\; i\in \Z
$$
(hereafter we will denote by $\partial_j$ the derivative with respect to the $j$-th component).\\


Observe that while orbits correspond to critical points of the
action-functional, yet they are not in general minima\footnote{The
  concept of minimum might seem quite ambiguous in this setting, since
  the action-functional is generally a divergent series. Here---as is generally
  done in similar contexts in {classical and} statistical
  mechanics---by {\it minimum} we mean that each subsequence of finite
  length minimizes the action functional among all configurations with
  the same end-points and the same length.}.  {\it Aubry-Mather
  theory} is concerned with the study of orbits that minimize this
action-functional amongst all configurations with a prescribed
rotation number; we will call these orbits {\it action-minimizing
  orbits} or, simply, {\it minimizers}.
We will call the corresponding sequences $(x_i)_{i\in\Z}$ {\it minimal configurations}.\\  

Recall that the rotation
number of an orbit $\{(x_i,y_i)\}_{i\in\Z}$ is given by
$\omega = \lim_{|i|\rightarrow \pm \infty} \frac{x_i}{|i|}$, if this
limit exists.  For example, in example 1 above, orbits starting at
$(x_0,y_0)$ have rotation number $\rho(y_0).$
%
A natural question is then: does $f$ admit orbits with any prescribed rotation number? 
In \cite{Birkhoff}, Birkhoff proved that for every rational number $p/q$ in the twist interval $(\omega_-, \omega_+)$, there exist at least two periodic orbits of $f$ with rotation number $p/q$.\\

In the eighties, Aubry \cite{Aubry} and Mather \cite{Mather82} generalised independently this result to irrational rotation numbers. More precisely:\\

\noindent {\bf Theorem (Aubry, Mather).} {\it A monotone twist map possesses action-minimizing orbits for every rotation number in its twist interval $(\omega_-,\omega_+)$}.\\

\begin{remark}
They also showed that every action-minimizing orbit lies on a
Lipschitz graph over the $x$-axis and that if there exists an
invariant circle, then every orbit on that circle is a minimizer. Hence, in the integrable case (see Example 1), each orbit is a minimizer.
In a naive---yet meaningful---way, action-minimizing orbits ``resemble'' (and generalise) motions on invariant circles, even in the case in which invariant circles do not exist.\\
\end{remark}

\smallskip

Two very important objects in the study of these action-minimizing orbits are
represented by the so-called  Mather's {\it minimal average actions},
also called $\alpha$ and $\be$-functions:  in some sense they can be seen as an integrable Hamiltonian and Lagrangian associated to the system. \\

Let us now introduce the {\it minimal average action} (or {\it Mather's $\be$-function}) more precisely.

\begin{definition}\label{defbeta}
Given $\om \in (\om_-,\om_+)$, let $x^{\omega} = (x_i)_{i\in\Z}$ be any minimal configuration with rotation number $\omega$. Then, the value of the {minimal average action} at $\omega$ is given by
\begin{equation}\label{avaction}
\be(\omega) = \lim_{ \substack{N_1\to-\infty \\ N_2 \to +\infty}} %
\, \frac{1}{N_2-N_1} \, \sum_{i=N_1}^{N_2-1} h(x_i,x_{i+1}).
%
%
\end{equation}
This value is well-defined, since 
{the limit exists and}
does not depend on the chosen orbit.\\
\end{definition}

This function $\be: (\omega_-,\omega_+)\longrightarrow \R$ encodes a lot of interesting information on the dynamical and topological properties of these action-minimizing orbits and the system. In particular, 
understanding whether or not this function is differentiable, or even smoother, and what are the implications of its regularity to the dynamics of the system has revealed to be a central question in the study of twist maps and, more generally, of Tonelli Hamiltonian systems (see for example \cite{Mather90, MS}).
While for higher dimensional system this question represents a formidable problem (and is still quite far from being completely understood), in the twist-map case \cite{Mather90} (and for surfaces, see \cite{MS}) the situation is much more clear. In fact:
\begin{itemize}
\item[i)] $\be$ is strictly convex and, hence, continuous (see \cite{MatherForni});
\item[ii)] $\be$ is differentiable at all irrationals (see \cite{Mather90});
\item[iii)] $\be$ is differentiable at a rational $p/q$ if and only if there exists an invariant circle consisting of periodic aaction-minimizing orbits of rotation number $p/q$ (see \cite{Mather90}).\\
\end{itemize}

In particular, being $\be$ a convex function, one can consider its convex conjugate:
$$
\alpha( c ) = \sup_{\omega\in \R} \left[ \omega \, c - \be(\omega)\right].
$$

This function---which is generally called {\it Mather's
  $\alpha$-function}---also plays an important r\^ole in the study of
action-minimizing orbits and in Mather's theory (particularly in higher dimension, see for example  \cite{MS, SV}). We
refer interested readers to surveys \cite{MatherForni, Siburg, SorLecNotes}.\\

Observe that for each $\omega$ and $c$ one has:
$$
\alpha( c ) + \be( \omega) \geq \omega  c,
$$
where equality is achieved if and only if $c\in \partial
\be(\omega)$ or, equivalently, if and only if  $\omega \in \partial
\alpha (c )$; the symbol $\partial$ denotes in this case the set of
subderivatives of the function---meant as the slopes of supporting
lines at a point---which is always non-empty, and is a singleton if
and only if the function is differentiable at that point.\\

\begin{remark}
  In the billiard case, since a generating function of the billiard
  map is minus the Euclidean distance, $-\ell$, the action of an orbit
  coincides up to sign to the length of the trajectory that
  the ball traces on the table $\Omega$; hence, minimizing the action
  corresponds to maximizing the total length.  Therefore, for rational
  numbers $-q \be (p/q)$ represents the maximal perimeter of
  polygons of type $(p,q)$ (\ie\!\!, roughly speaking, polygons with $q$
  vertices and winding number~$p$). Moreover, it is possible to
  express many interesting invariants of billiards in terms of these
  functions (see also \cite{Sor14}):
\begin{itemize}
 \item If $\Gamma_{\omega}$ is a caustic with rotation number $\omega \in (0,1/2]$, then $\be$ is differentiable at $\omega$ and $\be'(\omega)= - {\rm length}(\Gamma_{\omega}) =: -|\Gamma_{\omega}|$ (see \cite[Theorem 3.2.10]{Siburg}). In particular,   $\be$ is always differentiable at $0$ and $\be'(0)= - |\partial \Omega|$.
 \item If $\Gamma_{\omega}$ is a caustic with rotation number $\omega \in (0,1/2]$, then one can associate to it another invariant, the so-called {\it Lazutkin invariant} $Q(\Gamma_{\omega})$.  More precisely 
\begin{equation}\label{lazinv}
Q(\Gamma_{\omega}) = |A-P| + |B-P| - |\stackrel \frown {AB} |
\end{equation}
{where $P$ is any point on  $\partial \Omega$, $A$ and $B$ are the corresponding points on  $\Gamma_\omega$ at which the half-lines exiting from $P$ are tangent to $\Gamma_\omega$ (see figure \ref{figureLazutkin})}, 
 and $|\cdot |$ denotes the euclidean length and $|\stackrel \frown
 {AB}|$ the length of the arc on the caustic joining $A$ to $B$.
 This quantity is connected to the value of the $\alpha$-function  (see \cite[Theorem 3.2.10]{Siburg}):
 $$Q(\Gamma_\omega) = \alpha(  \be'(\omega)  ) = \alpha( - |\Gamma_{\omega}|).\\$$
 \end{itemize}
 \end{remark}
\begin{figure} [h!]
\begin{center}
\epsfig{file=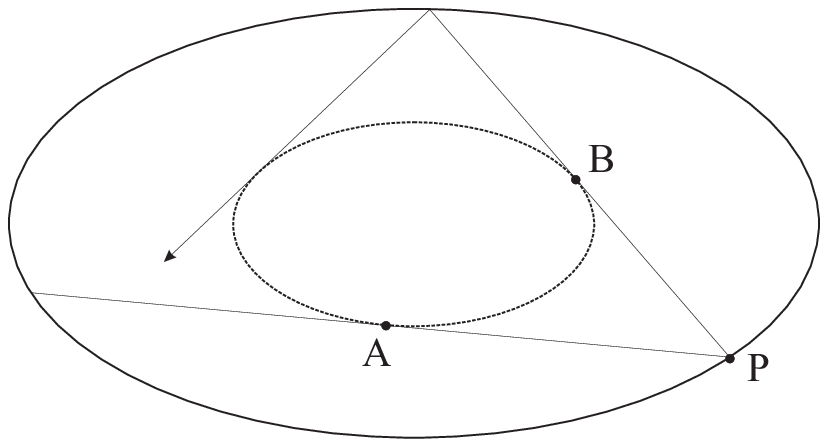,height=1.75in,angle = 0}
\caption{}
\label{figureLazutkin}
\end{center}
\end{figure}

\begin{remark}
Recently, in \cite{SVeselov}, the authors drew a connection between Mather's $\be$-function and Fock's function related to so-called Markov numbers; in particular, they used this relation to answer a question by Fock on  the regularity of this function.
\end{remark}

\vspace{5 pt}

\section{Statement of the main result} \label{secmainresult}
Let us now consider the  framework  {of a standard-like twist map (see Example 2 in Section~\ref{AMtheory})}:
\begin{equation}\label{eq:twist}
T_g(x,y)=(x',y') \ens \text{with} \ens \left\{
\begin{aligned}
x' &=x+y+g(x) \\
y' &=y+g(x)
\end{aligned}
\right.
\end{equation}
with $g$ a 1-periodic, real analytic function of zero mean.  Let $G$
be the primitive of $g$ with zero mean, 
and observe that $G$ is real analytic and 1-periodic as well.  
As a generating function for~$T_g$, we take \[ h(x,x')=\frac{1}{2}(x-x')^2+G(x).\]
As was mentioned earlier, Mather's $\be$-function at any $\om\in\R$ is
defined as the average action of any minimal configuration
$(x_j)_{j\in \ZZ}$ of rotation number~$\om$:
\begin{equation}\label{eq:betaomega}
 \be(\omega)=\lim_{ \substack{N_1\to-\infty \\ N_2 \to +\infty}
%
%
} {\, \frac{1}{N_2-N_1}} \, \sum_{N_1\le j<N_2} h(x_j,x_{j+1}),
\end{equation}
%
and the general theory assures that $\be:\RR \to \RR$ is continuous
everywhere, and is differentiable at any $\omega \in \RR\setminus
\QQ$.
It is worth noting particular symmetry properties in the system at hand:


\begin{lemma}
The function $\omega \mapsto \be(\om)-\frac{1}{2} \om^2$ is $1$-periodic and even on~$\RR$.
\end{lemma}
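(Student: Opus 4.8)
The plan is to derive both assertions from two elementary symmetries of the discrete variational problem attached to $h(x,x')=\tfrac12(x-x')^2+G(x)$: the integer translation $\sigma\colon (x_j)_{j\in\ZZ}\mapsto(x_j+j)_{j\in\ZZ}$ and the time reversal $\iota\colon(x_j)_{j\in\ZZ}\mapsto(x_{-j})_{j\in\ZZ}$. For each of these I will (i) check that it carries minimal configurations to minimal configurations, and (ii) determine how it affects the rotation number and the averaged action in \eqref{eq:betaomega}. Then the two identities
\[
 \be(\om+1)=\be(\om)+\om+\tfrac12, \qquad \be(-\om)=\be(\om)
\]
fall out, and they are precisely $1$-periodicity and evenness of $\om\mapsto\be(\om)-\tfrac12\om^2$, respectively.

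For periodicity I take a minimal configuration $(x_j)_j$ of rotation number $\om$ and set $\hat x_j=x_j+j$. Since $G$ is $1$-periodic, one computes $h(\hat x_j,\hat x_{j+1})=h(x_j,x_{j+1})-(x_j-x_{j+1})+\tfrac12$, so that on a finite window $N_1\le j<N_2$ the corrections sum telescopically to $(x_{N_2}-x_{N_1})+\tfrac12(N_2-N_1)$, a quantity depending only on the two end-points and the length of the window. Consequently $\sigma$ maps competitors with prescribed end-points bijectively onto competitors with the corresponding shifted end-points, changing their action by this same constant; hence it preserves minimality. Since $\hat x$ has rotation number $\om+1$, dividing by $N_2-N_1$ in \eqref{eq:betaomega} and using $\tfrac{x_{N_2}-x_{N_1}}{N_2-N_1}\to\om$ gives $\be(\om+1)=\be(\om)+\om+\tfrac12$.

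For evenness I use $h(x',x)=h(x,x')+G(x')-G(x)$. Reversing a finite window of $(x_j)_j$, i.e. replacing $(x_{N_1},\dots,x_{N_2})$ by $(x_{N_1+N_2-j})_{N_1\le j\le N_2}$, then changes its action by $G(x_{N_2})-G(x_{N_1})$, again a boundary-only term, so $\iota$ preserves minimality. As $\iota$ sends rotation number $\om$ to $-\om$, feeding the reversed minimal configuration into \eqref{eq:betaomega} and noting that the boundary contribution $G(x_{\pm N_i})$ stays bounded (because $G$ is continuous and $1$-periodic) and is killed by the factor $\tfrac1{N_2-N_1}$, we get $\be(-\om)=\be(\om)$. (Alternatively one could read $\be(-\om)=\be(\om)$ off the $j\leftrightarrow-j$ symmetry of the second-order recursion $x_{j+1}-2x_j+x_{j-1}=g(x_j)$ obeyed by orbits, which is the Euler--Lagrange equation $\partial_2 h(x_{j-1},x_j)+\partial_1 h(x_j,x_{j+1})=0$ for the present $h$; but the variational formulation is more convenient since it yields the preservation of minimality at the same time.)

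The only point that asks for some care rather than a one-line computation is step (i): upgrading ``the action changes by a constant depending only on the end-points'' to ``minimal configurations go to minimal configurations''. This is routine once one unwinds the definition (a bi-infinite configuration is minimal iff every finite sub-segment minimizes the action among all competitors with the same end-points and the same number of sites) and observes that both $\sigma$ and $\iota$ act as bijections on each such set of competitors; I would spell out this bookkeeping explicitly. It is also worth recalling at the outset that $\be(\om)$ is independent of the chosen minimal configuration (as noted after Definition~\ref{defbeta}), which is what makes the passages to the limit above unambiguous.
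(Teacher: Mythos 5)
Your proof is correct and follows essentially the same route as the paper: you use the two symmetry identities of $h$ (translation by $1$ and time reversal), compute the finite-window actions, observe that the corrections are boundary terms so minimality is preserved, and deduce $\be(\om+1)=\be(\om)+\om+\tfrac12$ and $\be(-\om)=\be(\om)$, exactly as in the paper's argument.
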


\begin{proof}
This is a consequence of the following symmetry properties of the generating
function~$h$:
\beglab{eqnsymmproph}
h(x+m,x'+m+1) = h(x,x') + x'-x + \demi, \qquad
h(x',x) = h(x,x') + G(x') - G(x)
\edla
for all $x,x'\in\R$ and $m\in\Z$.
Indeed, take an arbitrary sequence $(x_j)_{j\in \ZZ}$ with a definite
rotation number~$\om$ and consider its finite-segment actions
$A(N_1,N_2) \defeq \sum_{N_1 \le  j  < N_2} h(x_j, x_{j+1})$.
Setting
\[
x^*_j:=x_j+j, \quad x^{**}_j:=x_{-j}
\quad\text{for all $j\in\Z$,}
\]
we get sequences with rotation numbers $\om+1$ and~$-\om$, whose
finite-segement actions can be computed from~\eqref{eqnsymmproph}:
\[
\sum_{N_1 \le  j  < N_2} h(x^*_j, x^*_{j+1})= 
\sum_{N_1 \le  j  < N_2} [h(x_j, x_{j+1}) +x_{j+1}-x_j+\dem] 
= A(N_1,N_2) + x_{N_2} - x_{N_1} + \frac{N_2-N_1}{2}
\]
and, changing the summation index in $\ell=-j-1$,
\begin{multline*}
\sum_{N_1 \le  j  < N_2} h(x^{**}_j, x^{**}_{j+1}) = 
\sum_{-N_2-1<\ell\le -N_1-1} h(x_{\ell+1}, x_{\ell}) = \\[1ex]
\sum_{-N_2\le\ell < -N_1} [h(x_{\ell}, x_{\ell+1}) + G(x_{\ell+1})-G(x_{\ell})] 
= A(-N_2,-N_1) +G(x_{-N_1})-G(x_{-N_2}).
\end{multline*}
Hence, $(x_j)_{j\in\Z}$ is a minimizer  $\iff$
$(x_j^*)_{j\in\Z}$ is a minimizer  $\iff$
$(x_j^{**})_{j\in\Z}$ is a minimizer. Moreover, since $G$ is bounded,
our computation entails
\[
\be(\om+1)=\be(\om)+\om+\frac{1}{2}, \qquad
\be(-\om)=\be(\om)
\]
whence the result follows.
\end{proof}

\smallskip


Our main goal is to show that: {\it if $g$ is not too large (with
  respect to the width of its analyticity strip), then the restriction
  of $\be$ to a suitable subset of Diophantine frequencies is even
  more regular, in the sense that this restriction admits a
  $\gC^1$-holomorphic extension~$\bc$ defined on a complex
  domain {(see below for the definition of $\gC^1$-holomorphic functions)}.}\\

\smallskip


In order to be more precise we need to fix some notation. 
Let us fix once for all $\tau>0$  and consider for $M > 
2\ze(1+\tau)$ (here $\zeta$ is Riemann's zeta function) the following Diophantine set
\begin{equation}	\label{eq:AMR}
A_M^\RR = \left\{\, \om \in \RR \mid
\forall (n,m)\in\ZZ\times\NN^*,\; 
|\om - \frac{n}{m} | \ge \frac{1}{M m^{2+\tau}} \,\right\}.
\end{equation}
This is a closed
subset of the real line, of positive measure, which has empty interior and is
invariant by the integer translations. We also consider the following subset of the complex plane
\begin{equation}	\label{eq:AMC}	
A_M^\CC = \bigl\{\, \om \in\CC \mid\; \exists \om_*\in A_M^\RR
\;\text{ such that }\; |\IM \om| \ge |\om_* - \RE \om| \,\bigr\}
\end{equation}
which has the property that $A_M^\CC \cap \RR= A_M^\RR$ {(see Figure~\ref{fig:one})}.
%
%
%
\begin{figure}  

\begin{center}


\epsfig{file=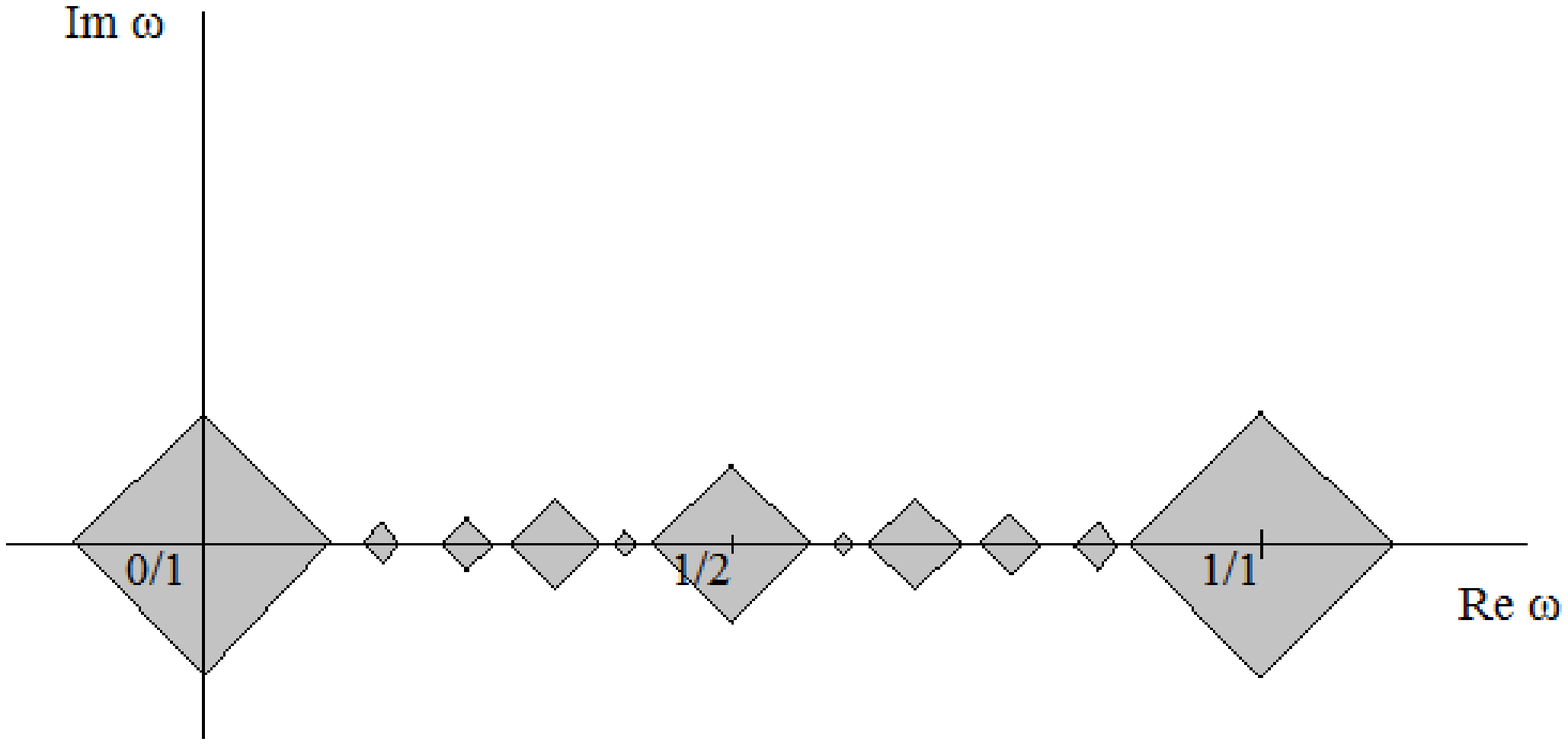,height=1.8in,angle = 0}

\end{center}

\caption{\label{fig:one} {The perfect subset $A_M^\CC\subset\CC$}}

\end{figure}
%
%
Many of the functions that will be important for us satisfy the periodicity condition
$\phi(\om+1)=\phi(\om)$, in fact they  can be even expressed as $\phi=\psi \circ E$, where
\begin{equation} \label{eq:defEexptwopi}
E(\om):=e^{2\pi i \om}
\end{equation}
and $\psi$ is defined on the following compact subset of the Riemann
sphere~$\PP$
(see Figure~\ref{fig:two}):
\beglab{eqdefKMPP}
K_M:=E(A_M^\CC)\cup\{0,\infty\}.
\edla



%
\begin{figure} 

\begin{center}



\epsfig{file=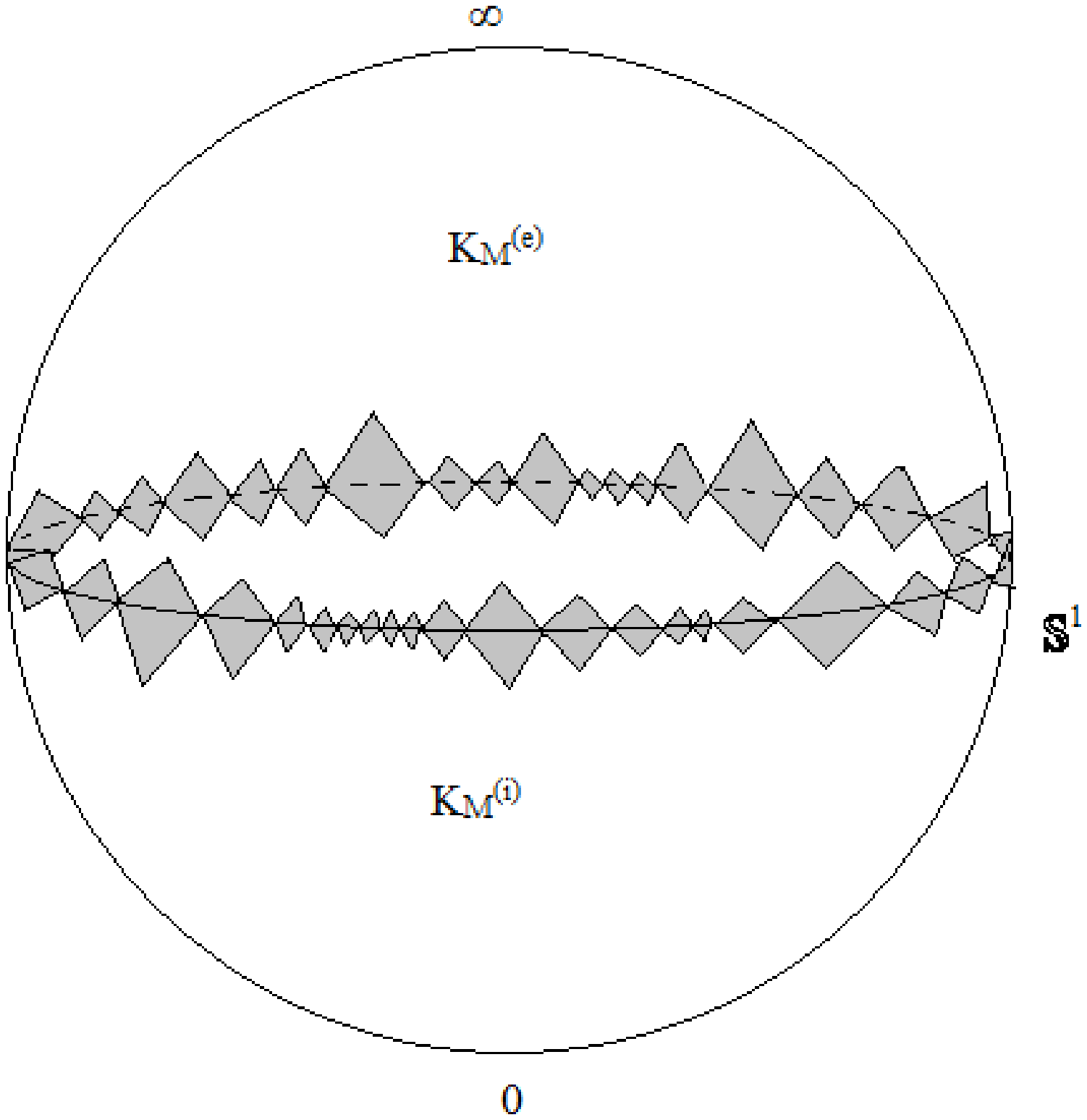,height=3.6in,angle = 0}

\end{center}

\caption{\label{fig:two}{The perfect subset $K_M = K_M\IN \cup K_M\EX\subset\PP$}}

\end{figure}
%

Let us now recall the definition of the spaces of bounded $\gC^1$-holomorphic
functions $\Chol(C,B)$, where $C\subset\CC$ is
perfect and closed and $B$ is a Banach space, and $\Chol(K,B)$, where $K$ is a compact and perfect
subset of~$\PP$.
Both $\Chol(C,B)$ and $\Chol(K,B)$ are Banach spaces, stable under
multiplication if~$B$ is a Banach algebra.

The Banach space
$\Chol(C,B)$ and its norm are defined as follows:
a function $\psi \col C \to B$ is in $\Chol(C,B)$ if it is
continuous and bounded, and there is a bounded continuous function from~$C$ to~$B$,
which we denote by~$\psi'$, such that the function
$\Om\psi \col C\times C \to B$
defined by the formula
\beglab{eqdefOmpsi}
\Om\psi(q,q') \defeq 
\begin{dcases}
\ens\quad \psi'(q) & \text{if $q=q'$,} \\
\frac{\psi(q')-\psi(q)}{q'-q} & \text{if $q\neq q'$,}
\end{dcases}
\edla
is continuous and bounded; the function~$\psi'$ is then
unique\footnote{%
Moreover, for any interior point~$q_0$ of~$C$,
the complex derivative of~$\psi$ at~$q_0$ exists and coincides with
$\psi'(q_0)$.
} 
and we set
\beglab{eqdefnormCholC}
\norm{\psi}_{\Chol(C,B)} \defeq \max \Big\{
\sup_{q\in C} \norm{\psi(q)}_B, \;
%
%
\sup_{(q,q')\in C\times C} \norm{\Om\psi(q,q')}_B
\Big\}.
\edla
This is a Banach space norm equivalent to the one indicated in \cite{He} or \cite{MarmiS}
(or to the one indicated in \cite{CMS},
which is designed to be a Banach algebra norm whenever~$B$ is a Banach
algebra
).

Now, if $K$ is a compact set in $\PP$, we will denote by $\gO(K,B)$
the uniform algebra of continuous functions $\ph\col K\to B$ which are
holomorphic in the interior of $K$, endowed with the norm
\beglab{eqdefnormgOK}
\norm{\ph}_{\gO(K,B)} \defeq \max_{q\in K} \norm{\ph(q)}_B. 
\edla
%
%
To define $\Chol(K,B)$, we assume furhtermore that $K$ is perfect so as to ensure the
uniqueness of the derivative.
Following \cite{FMS}, we cover~$\PP$ with two charts, using~$q$ as a complex coordinate
in~$\CC$ and $\xi = \frac{1}{q}$ in $\PP\setminus\{0\}$;
a function $\ph\col K \to B$ belongs to $\Chol(K,B)$ if
its restriction $\ph_{| K\cap\CC}$ belongs to $\Chol(K\cap\CC,B)$
and the function $\check{\ph} \col \xi \mapsto \ph(1/\xi)$ belongs to 
$\Chol(\check{K} ,B)$, where $\check{K} \defeq \{\, \xi\in\CC \mid 1/\xi \in K
\,\}$ (with the convention $1/0=\infty$),
and we set 
\beglab{eqdefnormCholK}
\norm{\ph}_{\Chol(K,B)} \defeq \max\Big\{
\norm{\ph_{| K\cap\CC}}_{\Chol(K\cap\CC,B)},
\; \norm{\check{\ph}}_{\Chol(\check{K},B)} \Big\},
\edla

As usual, we simply denote by $\gO(K)$ and $\Chol(K)$ the spaces
obtained when $B=\CC$.
The following lemma, 
whose proof is deferred to the appendix,
will be used several times:

\begin{lemma}\label{l:xxx}
  Let $B$ be a Banach space, $A\subset \CC$ be a closed set, and let $K$ be the closure of $E(A)$ in the
  Riemann sphere $\PP$ {with~$E$ as in~\eqref{eq:defEexptwopi}. 
  If $\psi \in \Chol(K, B)$ then the function
  $\psi\circ E \in \Chol( A, B)$, and
  $\|\psi \circ E\|_{\Chol( A, B)} \leq C \|\psi \|_{\Chol(K,B)}$} ($C=2\pi e^{2\pi}$ will do).
\end{lemma}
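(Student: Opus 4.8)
The plan is to reduce everything to a concrete comparison between the sup-norms on $A$ and on $K = \overline{E(A)}$ of a function and its $\gC^1$-holomorphic derivative, using that $E(\om) = e^{2\pi i \om}$ is a covering-type map whose derivative is bounded on any horizontal strip. First I would unwind the definition of $\Chol$: to show $\psi\circ E \in \Chol(A,B)$ I must exhibit a bounded continuous candidate derivative and check that the associated difference quotient operator $\Om(\psi\circ E)$ is bounded and continuous on $A\times A$. The natural candidate is $(\psi\circ E)'(\om) := \psi'(E(\om))\cdot E'(\om) = 2\pi i\, e^{2\pi i\om}\, \psi'(E(\om))$, which is visibly continuous and bounded on $A$ since $\psi'$ is bounded on $K$ and $|E'(\om)| = 2\pi|e^{2\pi i\om}| = 2\pi e^{-2\pi\IM\om}$ is bounded on $A$ whenever $A$ lies in a horizontal strip; here the key geometric observation is that $A$ (taken to be $A_M^\CC$ in the applications, but the lemma is stated for general closed $A$ — so one should note $A$ is implicitly bounded in the imaginary direction, or simply absorb an $e^{2\pi|\IM\om|}$ factor, which is why the constant is $2\pi e^{2\pi}$, matching $|\IM\om|\le 1$ on the relevant region).

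Next I would handle the difference quotient. For $\om\neq\om'$ in $A$ write
\[
\frac{\psi(E(\om')) - \psi(E(\om))}{\om' - \om}
= \Om\psi\bigl(E(\om),E(\om')\bigr)\cdot\frac{E(\om')-E(\om)}{\om'-\om}.
\]
The first factor is bounded by $\|\psi\|_{\Chol(K,B)}$ by definition of the $\Chol(K,B)$ norm (note $E(\om),E(\om')\in K$), and it is continuous in $(\om,\om')$ by composition with the continuous map $(\om,\om')\mapsto(E(\om),E(\om'))$ together with continuity of $\Om\psi$ — including at the diagonal, where it limits to $\psi'(E(\om))$. The second factor is the difference quotient of the entire function $E$; since $E$ is holomorphic, $\frac{E(\om')-E(\om)}{\om'-\om} = \int_0^1 E'(\om + t(\om'-\om))\,\dd t$, which is continuous on $A\times A$ (extending to $E'(\om)$ on the diagonal) and bounded in modulus by $\sup$ of $|E'|$ over the convex hull of $A$. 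If $A$ is not convex one should instead bound along a path or simply use $|E(\om')-E(\om)|\le 2\pi e^{2\pi(|\IM\om|\vee|\IM\om'|)}|\om'-\om|$, which follows from the mean value estimate applied to $E$ on the segment; since in the applications $A\subset A_M^\CC$ one has $|\IM\om|\le$ the width, and for the stated constant $|\IM\om|\le 1$ suffices. Multiplying the two bounds gives $\|\Om(\psi\circ E)\|_\infty \le 2\pi e^{2\pi}\|\psi\|_{\Chol(K,B)}$, and together with the sup-norm bound $\|\psi\circ E\|_\infty \le \|\psi\|_{\gO(K,B)}\le\|\psi\|_{\Chol(K,B)}$ this yields the claim with $C = 2\pi e^{2\pi}$.

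The main subtlety — more bookkeeping than genuine obstacle — is the behavior of the two charts of $\PP$ near $0$ and $\infty$: a priori $E(A)$ can accumulate at $0$ or $\infty$ (when $\IM\om\to\pm\infty$), so $K=\overline{E(A)}$ may contain those points, and one must check the composition is still fine. But since $A$ is assumed closed and, in the intended use, contained in a horizontal strip, $E(A)$ stays in an annulus $\{r\le|q|\le R\}$ bounded away from $0$ and $\infty$, so only the $q$-chart is relevant and $\check K$ plays no role in the estimate — the factor $\norm{\check\ph}_{\Chol(\check K,B)}$ in $\norm{\psi}_{\Chol(K,B)}$ is simply discarded (it only makes the right-hand side larger). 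I would remark on this explicitly. Finally I would double-check the constant: $|E'(\om)| = 2\pi e^{-2\pi\IM\om} \le 2\pi e^{2\pi}$ when $\IM\om\ge -1$, and the $\Chol$-norm being a max rather than a sum means no extra factor of $2$ is needed; hence $C = 2\pi e^{2\pi}$ works, completing the proof.
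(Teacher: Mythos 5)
There is a genuine gap. You assume that $A$ is (implicitly) contained in a horizontal strip, so that $E(A)$ stays in an annulus bounded away from $0$ and $\infty$, and you then discard the $\check K$ half of the $\Chol(K,B)$ norm as irrelevant. Neither the statement nor the intended application allows this. The lemma is for an arbitrary closed $A\subset\CC$, and in the paper it is applied with $A=A^\CC_M$, which is unbounded in the imaginary direction --- that is exactly why $K$ is taken to be the closure of $E(A)$ in the Riemann sphere and why $K_M=E(A^\CC_M)\cup\{0,\infty\}$ contains $0$ and $\infty$ (cf.\ part (iii) of Theorem~\ref{th:main}, which exploits the limits as $\IM\om\to\pm\infty$). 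On the part of $A$ where $\IM\om$ is very negative your estimates collapse: $|E'(\om)|=2\pi e^{-2\pi\IM\om}$ is unbounded there, so the factor $\frac{E(\om')-E(\om)}{\om'-\om}$ is not bounded by $2\pi e^{2\pi}$, and the boundedness of your candidate derivative $2\pi i\,e^{2\pi i\om}\psi'(E(\om))$ cannot be deduced from the boundedness of $\psi'$ on $K\cap\CC$ alone; it requires differentiability of $\psi$ at $\infty$ in the chart $\xi=1/q$, i.e.\ precisely the information $\check\psi\in\Chol(\check K,B)$ that you propose to throw away (indeed $q\,\psi'(q)=-\xi\,\check\psi'(\xi)\to 0$ as $q\to\infty$). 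The parenthetical ``or simply absorb an $e^{2\pi|\IM\om|}$ factor'' does not rescue a uniform bound.

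Your factorization of the difference quotient through $\Om\psi$ is the right move where $\IM\om$ is bounded below, and it is what the paper does there; but the paper's proof completes the argument by covering $A$ with the two overlapping regions $A^+=\{\om\in A\mid \IM\om>-1\}$ and $A^-=\{\om\in A\mid \IM\om<1\}$: on $A^+\times A^+$ it uses the $q$-chart factorization (your computation), on $A^-\times A^-$ it uses the $\xi$-chart factorization $\Om\check\psi\bigl(E(-\om),E(-\om')\bigr)\cdot\frac{E(-\om')-E(-\om)}{\om'-\om}$, whose second factor is bounded by $2\pi e^{2\pi}$ there, and when $\om,\om'$ lie in different regions it uses the trivial bound coming from $|\om-\om'|\ge 2$. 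The derivative on the diagonal is the common value $2\pi i\,E(\om)\psi'(E(\om))=-2\pi i\,E(-\om)\check\psi'(E(-\om))$, whose boundedness comes from the second expression on $A^-$. As written, your proof establishes the lemma only under the extra hypothesis that $A$ has bounded imaginary part, which excludes the case the paper actually needs; to repair it you must add the $A^-$ and mixed cases and retain the $\check K$ part of the norm.
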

%


\smallskip

We also define, for any positive real~$R$,
\begin{equation}	\label{eq:SR}
S_R = \{ z\in\CC/\ZZ \mid |\IM z| < R \}
\end{equation}
and $\|\varphi\|_{R} \defeq \sup_{z\in S_R} |\varphi(z)|$ for any
function $\varphi\col S_R \to \C$.
Our main result is:\\


\begin{theorem}\label{th:main}
Let~$R_1$ be positive real. Then there is
    $c=c(\tau, R_1)>0$ such that, for any real analytic 1-periodic
    function~$g$ which has zero mean and extends holomorphically
    to~$S_{R_1}$ with $\|g\|_{R_1} < c$, and for any~$M$ such that
  $1 < \frac{M}{2 \zeta(1+\tau)} < \big( \frac{c}{\|g\|_{R_1}}
  \big)^{1/8}$,
Mather's $\be$-function for the system~\eqref{eq:twist} satisfies
the following:
%
%
%
$\be|_{A_M^\RR}$ admits a complex extension to~$A_M^\CC$ of the form
\[
\bc(\om) \defeq \frac{\om^2}{2} + \Phi^\C_\be(\om),
\]
where $\Phi^\C_\be \in \Chol(A_M^\CC)$.
{Moreover,} 
\begin{itemize}
\item[(i)]  
the derivative of~$\bc$ is an extension of the derivative
  of $\be|_{A_M^\RR}$;
\item[(ii)] the function $\Phi_\be^\C$ is even and 1-periodic, and $\ov{\Phi^\C_\be(\om)}=\Phi^\C_\be(\ov{\om})$;
\item[(iii)] $\Phi^\C_\be=\ti{\Phi} \circ E$ for a function
  $\ti{\Phi} \in \Chol(K_M)$ and $E(z):=e^{2\pi i z}$. This implies that $\Phi^\C_\be$ is defined in an infinite strip $\{\IM \om >\ell\}$ (resp.\ $\{\IM \om <-\ell\}$) and admits a limit as  $ \IM \om \to +\infty$ (resp. $\IM \om \to -\infty$).
\end{itemize} 
\end{theorem}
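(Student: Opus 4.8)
The plan is to build $\bc$ by combining the KAM parametrization from \cite{CMS} with the variational characterization of $\be$ on invariant circles. For a Diophantine $\om\in A_M^\RR$, the smallness hypothesis on $\|g\|_{R_1}$ relative to $M$ should place us in the KAM regime, so that $T_g$ admits an invariant circle of rotation number~$\om$ carrying the quasi-periodic minimal configuration $x_j = u_\om(\om j)$, where $u_\om(\th)=\th + v_\om(\th)$ is the (real-analytic, $1$-periodic) conjugacy and $v_\om$ solves the associated cohomological/functional equation. Since every orbit on an invariant circle is a minimizer (the Aubry--Mather remark quoted above), the defining average in \eqref{eq:betaomega} becomes the integral $\be(\om)=\int_0^1 h\big(u_\om(\th),u_\om(\th+\om)\big)\,\dd\th$, which with $h(x,x')=\tfrac12(x-x')^2+G(x)$ is $\be(\om)=\tfrac{\om^2}{2} + \int_0^1\!\big[\tfrac12\big(v_\om(\th+\om)-v_\om(\th)+\om-\om\big)^2+\cdots\big]$; after expansion the $\om^2/2$ term is exactly the leading piece and the remainder defines $\Phi^\C_\be$. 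The key input from \cite{CMS} is that the family $\om\mapsto v_\om$ (equivalently the conjugacies, or their Fourier/Herman-type representatives) extends to a $\gC^1$-holomorphic function of $\om$ on $A_M^\CC$, in fact factoring through $E$ as a $\Chol(K_M)$-valued object; this is the rigidity statement about complex extensions of analytic parametrizations of KAM curves that the introduction advertises.

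The steps, in order, would be: (1) Verify the quantitative KAM statement: for $\om\in A_M^\RR$ with the stated relation between $M$, $\tau$, $R_1$ and $\|g\|_{R_1}$, there is a real-analytic invariant circle of rotation number $\om$, with the conjugacy $u_\om$ holomorphic on a fixed strip $S_{r}$ and depending on $\om$ in a controlled way. Here I would lean on \cite{CMS} (and the auxiliary results promised for section~\ref{sec:intermed} and the appendix), taking $c=c(\tau,R_1)$ precisely so the $1/8$-power condition matches the KAM smallness threshold. (2) Promote the $\om$-dependence to a $\Chol$-statement: show $\om\mapsto u_\om$ (or $v_\om$, or the relevant Hamilton--Jacobi/generating data) is in $\Chol\big(A_M^\CC, H^\infty(S_{r'})\big)$ for some $r'<r$, and via Lemma~\ref{l:xxx} and the factorization through $E$, that it comes from a $\Chol(K_M)$-valued function. (3) Plug into the integral formula: define $\Phi^\C_\be(\om) \defeq \int_0^1\big[\tfrac12\big(\om+v_\om(\th+\om)-\th-v_\om(\th)\big)^2 - \tfrac{\om^2}{2} + G(u_\om(\th))\big]\dd\th$ and check it is $\Chol$ on $A_M^\CC$: this is stability of $\Chol$ under composition with analytic maps, multiplication (Banach-algebra property), and integration in $\th$ of a parameter family — all routine once (2) is in hand. (4) Read off the properties: $1$-periodicity and evenness of $\Phi^\C_\be$ follow from the symmetry Lemma above (the maps $x_j\mapsto x_j+j$ and $x_j\mapsto x_{-j}$ act on the conjugacies) together with uniqueness of $\gC^1$-holomorphic extension on the perfect set $A_M^\CC$; the reality condition $\ov{\Phi^\C_\be(\om)}=\Phi^\C_\be(\ov\om)$ follows because $g$ is real on $\RR$ so $\ov{u_{\ov\om}(\ov\th)}=u_\om(\th)$; the factorization through $E$ and hence the existence of limits as $\IM\om\to\pm\infty$ comes from (2); and item~(i), that $(\bc)'$ extends $(\be|_{A_M^\RR})'$, follows because on interior points the $\gC^1$-holomorphic derivative is the genuine complex derivative, while on $A_M^\RR$ one identifies $\be'(\om)$ with $\int_0^1 \partial_\th\big[\cdots\big]$ via the envelope theorem applied to the variational problem (the derivative of the minimal average action at an invariant circle equals the ``rotation vector'' of the canonical $1$-form), and this matches $(\Phi^\C_\be)' + \om$. (5) Uniqueness of the extension: any two $\gC^1$-holomorphic functions on the perfect set $A_M^\CC$ agreeing on $A_M^\RR$ coincide, because $A_M^\RR$ is not ``thin'' in $A_M^\CC$ in the sense needed for the quasianalyticity of the $\Chol$ class — this is exactly what Corollary~\ref{cor:betaquasianalytic} will make precise, and I would invoke it.

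The main obstacle is step (2), i.e.\ transferring the analytic-in-$\om$ KAM data from the real Diophantine line to the complex Cantor-like set $A_M^\CC$ as a genuinely $\gC^1$-holomorphic (not merely Whitney-smooth) object with norms controlled uniformly in $M$. The difficulty is twofold: first, the small-divisor estimates degrade as $\IM\om\to 0$ near the ``teeth'' of $A_M^\CC$, so one must check that the Diophantine condition defining $A_M^\RR$ — propagated into the cones of $A_M^\CC$ — still controls $|1-E(m\om)|^{-1}$ uniformly, which is where the factor-through-$E$ viewpoint and the geometry of $K_M$ are essential; second, $\gC^1$-holomorphy on a perfect set is a strong requirement (it forces the difference quotient $\Om\psi$ to be jointly continuous up to the boundary), so the KAM iteration has to be run in the $\Chol$ category from the start rather than merely producing, for each fixed $\om$, an analytic $u_\om$ and then arguing regularity in $\om$ afterwards. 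I expect this to be handled by quoting the corresponding theorem of \cite{CMS} essentially verbatim — the present paper's contribution being the passage from the parametrization to $\be$ itself and the uniqueness — so in the write-up step~(2) will be a citation plus a short bookkeeping argument, and the genuinely new work is steps (3)–(5).
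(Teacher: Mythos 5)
Your proposal follows essentially the same route as the paper: invoke the $\Chol(K_M,H^\infty(S_R))$-regular solution of the conjugacy equation from \cite{CMS}, average the action along the parametrized invariant curve to define $\bc$, verify that $\Phi^\C_\be\in\Chol(A_M^\CC)$ (factoring through $E$) via the composition/multiplication/integration stability lemmas, and identify $\bc$ and $\bc'$ with $\be$ and $\be'$ on $A_M^\RR$ using minimality of orbits on invariant circles together with the formula $\be'(\om)=\int_0^1 V\,\pth U\,d\th$. The only minor difference is that you deduce the symmetries of $\Phi^\C_\be$ from the real symmetry lemma plus quasi-analyticity, while the paper derives them from the corresponding symmetries of $u$ itself (proved by the same uniqueness/quasi-analyticity argument), so the substance is the same.
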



We thus have
\[
\bc|_{A_M^\RR}=\be|_{A_M^\RR}, \qquad
\bc'|_{A_M^\RR}=\be'|_{A_M^\RR}.
\]
We may refer to~$\bc$ as a $\Chol$-holomorphic function, but notice
that~$\bc$ is not bounded, it is $\bc(\om)-\frac{\om^2}{2}$ that belongs
to $\Chol(A_M^\CC)$.\\

The proof of Theorem \ref{th:main} {is given in
  Sections~\ref{sec:intermed}--\ref{sec:pfThmmain}. It} relies on a
result of \cite{CMS}, which studies regularity properties of the
parametrized KAM curves: the result on the beta function will be
obtained by averaging on the these curves, as we explain below. \\


The extension~$\bc$ of $\be|_{A_M^\RR}$ provided by
Theorem~\ref{th:main} is unique and does not depend on~$M$. This
follows from the quasi-analyticity property established in~\cite{MS2},
according to which the space of functions $\Chol(A^\CC_M)$ is
$\gH^1$-quasi-analytic, {where $\gH^1$ denotes the $1$-dimensional Hausdorff measure}
:
any subset $\Om\subset A^\CC_M$ of positive $\gH^1$-measure is a
uniqueness set\footnote{Namely, a function of this space which vanishes
  identically on~$\Om$ must vanish identically on the whole of~$A^\CC_M$.} for this space of functions.

This quasi-analyticity property has the following striking consequence
on the real Mather's $\be$-function:
\begin{corollary}\label{cor:betaquasianalytic}
Let $R_1>0$ and let~$g$ be real analytic 1-periodic, which has zero mean and extends holomorphically
    to~$S_{R_1}$ so that $\|g\|_{R_1} < c/3$, with $c=c(\tau,R_1)$ as
    in Theorem~\ref{th:main}.
Then there exists $M>2\zeta(1+\tau)$ such that, for every $\om_0\in\RR$,
the function $\be|_{A^\RR_M}$ is determined by the restriction of~$\be$ to any subset
of $[\om_0,\om_0+1]$ of Lebesgue measure $\ge \big( \frac{3\|g\|_{R_1}}{c} \big)^{1/8}$.
One can take $M \defeq 2\zeta(1+\tau) 
\big(\frac{c}{2\|g\|_{R_1}} \big)^{1/8}$.
\end{corollary}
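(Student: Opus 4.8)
The plan is to deduce Corollary~\ref{cor:betaquasianalytic} from Theorem~\ref{th:main} together with the quasi-analyticity of $\Chol(A^\CC_M)$ recalled just above, so essentially no new analysis is needed---only a careful bookkeeping of the constants. First I would fix $R_1>0$, let $c=c(\tau,R_1)$ be the constant furnished by Theorem~\ref{th:main}, and assume $\|g\|_{R_1}<c/3$. Set $M \defeq 2\zeta(1+\tau)\big(\tfrac{c}{2\|g\|_{R_1}}\big)^{1/8}$ and check that this choice is admissible for Theorem~\ref{th:main}: indeed $\tfrac{M}{2\zeta(1+\tau)}=\big(\tfrac{c}{2\|g\|_{R_1}}\big)^{1/8}>1$ since $2\|g\|_{R_1}<c$, and $\big(\tfrac{c}{2\|g\|_{R_1}}\big)^{1/8}<\big(\tfrac{c}{\|g\|_{R_1}}\big)^{1/8}$, so the two-sided bound $1<\tfrac{M}{2\zeta(1+\tau)}<\big(\tfrac{c}{\|g\|_{R_1}}\big)^{1/8}$ holds. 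Hence Theorem~\ref{th:main} applies and produces $\bc(\om)=\tfrac{\om^2}{2}+\Phi^\C_\be(\om)$ with $\Phi^\C_\be\in\Chol(A^\CC_M)$ and $\bc|_{A^\RR_M}=\be|_{A^\RR_M}$.

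Next I would record the measure bound on the real trace. By~\eqref{eq:AMR}, $A^\RR_M\cap[\om_0,\om_0+1]$ is obtained from $[\om_0,\om_0+1]$ by removing, around each rational $n/m$ in a bounded range of $n$, an interval of length $\tfrac{2}{Mm^{2+\tau}}$; summing over $m\ge 1$ and over the (at most $m+1$, effectively $m$ relevant) values of $n$ per denominator, the total removed length is at most $\tfrac{2}{M}\sum_{m\ge1}\tfrac{m}{m^{2+\tau}}=\tfrac{2\zeta(1+\tau)}{M}$. Therefore
\[
\mathrm{Leb}\big(A^\RR_M\cap[\om_0,\om_0+1]\big)\ \ge\ 1-\frac{2\zeta(1+\tau)}{M}\ =\ 1-\Big(\frac{2\|g\|_{R_1}}{c}\Big)^{1/8}\ >\ 1-\Big(\frac{3\|g\|_{R_1}}{c}\Big)^{1/8}.
\]
Now suppose $\Om\subset[\om_0,\om_0+1]$ has Lebesgue measure $\ge \big(\tfrac{3\|g\|_{R_1}}{c}\big)^{1/8}$. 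Then, since $\mathrm{Leb}\big(A^\RR_M\cap[\om_0,\om_0+1]\big) + \mathrm{Leb}(\Om) > 1 = \mathrm{Leb}([\om_0,\om_0+1])$, the intersection $\Om\cap A^\RR_M$ has positive Lebesgue measure, hence positive $\gH^1$-measure as a subset of $A^\CC_M$.

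Finally I would invoke quasi-analyticity: if two real analytic $1$-periodic zero-mean functions $g_1,g_2$ with $\|g_i\|_{R_1}<c/3$ give rise to $\be$-functions agreeing on $\Om$, then (using the \emph{same} $M$, which depends only on $\tau,R_1$ and an upper bound for $\|g_i\|_{R_1}$---here one should take $M$ associated to $\max(\|g_1\|_{R_1},\|g_2\|_{R_1})$, or simply note the statement quantifies over a single $g$ whose $\be$ is being reconstructed) the two $\Chol(A^\CC_M)$-functions $\Phi^\C_{\be,1}$ and $\Phi^\C_{\be,2}$ agree on $\Om\cap A^\RR_M$, a set of positive $\gH^1$-measure. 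Since $\Chol(A^\CC_M)$ is $\gH^1$-quasi-analytic by~\cite{MS2}, $\Om\cap A^\RR_M$ is a uniqueness set, so $\Phi^\C_{\be,1}\equiv\Phi^\C_{\be,2}$ on $A^\CC_M$; restricting to the reals, $\be_1|_{A^\RR_M}=\be_2|_{A^\RR_M}$. This is exactly the assertion that $\be|_{A^\RR_M}$ is determined by $\be|_\Om$. The only mildly delicate point is the constant arithmetic---verifying that the chosen $M$ is simultaneously admissible for Theorem~\ref{th:main} and large enough to make the measure estimate beat $\big(\tfrac{3\|g\|_{R_1}}{c}\big)^{1/8}$---but with $M=2\zeta(1+\tau)\big(\tfrac{c}{2\|g\|_{R_1}}\big)^{1/8}$ both hold as shown above, so there is no real obstacle; the substance is entirely carried by Theorem~\ref{th:main} and the quasi-analyticity input.
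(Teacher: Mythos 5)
Your proposal is correct and follows essentially the same route as the paper: verify that $M = 2\zeta(1+\tau)\big(\tfrac{c}{2\|g\|_{R_1}}\big)^{1/8}$ satisfies the hypotheses of Theorem~\ref{th:main}, use the standard estimate $m\big([\om_0,\om_0+1]\setminus A^\RR_M\big)\le 2\zeta(1+\tau)/M=\big(\tfrac{2\|g\|_{R_1}}{c}\big)^{1/8}<\big(\tfrac{3\|g\|_{R_1}}{c}\big)^{1/8}$ to get $m(\Om\cap A^\RR_M)>0$, and invoke the $\gH^1$-quasi-analyticity of $\Chol(A^\CC_M)$ from \cite{MS2} to conclude that $\Phi^\C_\be$, hence $\be|_{A^\RR_M}$, is determined by $\be|_{\Om\cap A^\RR_M}$. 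The detour through two functions $g_1,g_2$ is unnecessary (and, as you note, would raise an issue with the $g$-dependence of $M$), but since you settle on the single-$g$ reading—which is what the statement and the paper intend—the argument stands.
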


\begin{proof}[Proof of Corollary~\ref{cor:betaquasianalytic}]
Since $\|g\|_{R_1} < c/3$, we get
\[
1 < \big(\tfrac{c}{3\|g\|_{R_1}} \big)^{1/8} < \tfrac{M}{2\zeta(1+\tau)}
< \big(\tfrac{c}{\|g\|_{R_1}} \big)^{1/8}
\]
and we can apply Theorem~\ref{th:main}.
We get a function $\bc(\om) = \frac{\om^2}{2}+\Phi^\C_\be(\om)$ with $\Phi^\C_\be\in
\Chol(A^\CC_M)$.

\smallskip

Let us denote by~$m$ the Lebesgue measure on~$\R$.
Let $\Om \subset [\om_0,\om_0+1]$ have $m(\Om) \ge \big( \frac{3\|g\|_{R_1}}{c} \big)^{1/8}$.
We will prove that $\Om\cap A^\RR_M$ is a uniqueness set for $\Chol(A^\CC_M)$.

\smallskip

As is well known, $m\big( [\om_0,\om_0+1] \setminus A^\RR_M \big) <
2\zeta(1+\tau)/M$, hence
\[
m\big( [\om_0,\om_0+1] \setminus A^\RR_M \big) < 
\big(\tfrac{2\|g\|_{R_1}}{c} \big)^{1/8} < m(\Om).
\]
Consequently, $m(\Om \cap A^\RR_M) = 
m(\Om) - m\big( \Om\cap ( [\om_0,\om_0+1] \setminus A^\RR_M ) \big)
>0$
and $\Om \cap A^\RR_M$ is thus a uniqueness set for $\Chol(A^\CC_M)$.
It follows that $\Phi^\C_\be$ is determined by ${\Phi^\C_\be}_{|\Om \cap A^\RR_M}$;
hence $\bc$, and also $\be|_{A^\RR_M}=\bc|_{A^\RR_M}$, are determined by $\be|_{\Om \cap A^\RR_M}$.
\end{proof}

\smallskip


\section{Intermediate results} \label{sec:intermed}

In order to prove Theorem~\ref{th:main}, let us  first recall
  part of the results of \cite{CMS}.

\smallskip

A {\em parametrized invariant curve} of rotation number $\omega$ for $T_g$ is a pair of continuous functions
$(U,V): \TT \to \TT \times \RR$ such  that 
\begin{equation}\label{eq:conj}
T_g(U(\theta),V(\theta))=(U(\theta+\omega),V(\theta+\omega)) 
\ens \text{for all $\theta \in \TT$.}
\end{equation}
Note that, if $(U,V)$ is a parametrized invariant
curve for $T_{g}$ of rotation number $\om \in \RR\setminus \QQ$,
then  $(U(j\om))_{j\in \ZZ}$ is a minimal configuration of rotation
number $\om$ and the limit in equation \eqref{eq:betaomega} becomes 
\begin{multline}\label{eq:averaging}
\be(\om) = \lim_{\substack{ 
N_1 \to -\infty\\ 
N_2 \to +\infty 
}} 
\, \frac{1}{N_2-N_1} \,
\sum_{N_1\le j < N_2} \Big[\demi
                               \abs*{V\big((j+1)\om\big)}^2+G(U(j\om))\Big] \\
= \demi \int_{\TT} |V(\theta)|^2 d\theta +\int_{\TT} G(U(\theta)) d\theta,
\end{multline}
where we have used Birkhoff's ergodic theorem for the (uniquely)
ergodic rotation of angle $\om \in \RR\setminus \QQ$ on $\TT$.

Since we will be interested in a {perturbative} result (\ie valid for $\|g\|_{R_0}$ small), it is natural to write $U(\theta)=\theta +u(\theta)$, $V(\theta)=\om +v(\theta)$. Taking into account the fact that equation \eqref{eq:twist} implies $x'-x=y'$, we can reduce the quest of an invariant curve to the solution of the following system of equations:
\begin{equation}
\label{eq:u2}
\left\{ \begin{aligned}
& u(\th+\om) - 2 u(\th) + u(\th-\om) = g\big( \th + u(\th) \big) 
\\
& v(\th) = u(\th) - u(\th-\om). 
\end{aligned}
\right.
\end{equation}
It is in fact sufficient to solve the first equation for~$u$:
any $1$-periodic solution~$u$ to this second-order difference
equation 
is the first component of an invariant curve of
frequency~$\om$. 

{Let us denote by $H^\infty(S_R)$ the Banach space of 1-periodic
bounded holomorphic functions on~$S_R$ endowed with the supremum norm $\|\,.\,\|_R$.}
The approach of \cite{CMS} considers the unknown $u=u(\th,\om)$ in equation~(\ref{eq:u2}a) as a 
function of two {complex} variables, {the angle $\th\in S_R$ and
the frequency $\om \in A^\CC_M$, or more precisely as a function
of $\om \in A^\CC_M$ with values in $H^\infty(S_R)$.} We quote the
result as follows:

\begin{theorem}[Theorem 1, \cite{CMS}]\label{thm:uniform}
  Suppose $0<R<R_0$ and $K>0$.  Then there is $c_0=c_0(\tau, K, R, R_0)$ such
  that for any $f \col \RR \to \RR$ 1-periodic with zero mean which
  extends holomorphically to a neighbourhood of~$\ov{S_{R_0}}$ 
  with $\max \{ \|f\|_{R_0}, \|f''\|_{R_0}\} \leq K$, 
for all
  $M> 2 \zeta(1+\tau)$, and for all positive $\varepsilon< c_0M^{-8}$,
  there exists
  $\ti{u}=\ti{u}_{\varepsilon,M}\in \Chol(K_M,H^\infty(S_R))$
  with zero mean such that $u:=\tu \circ E$ (where
  $E(z):=e^{2\pi i z}$) satisfies
\begin{equation}\label{eq:CMS}  
u(\th+\om , \om) - 2 u(\th, \om) + u(\th-\om, \om) = \varepsilon f\big( \th + u(\th, \om) \big)
\end{equation}
for all $\theta \in S_R$ and $\om \in A^\CC_M$ such that $\theta \pm \om \in S_R$,
{and $u(\th,\om)\in\R$ if $\th\in\R/\Z$ and $\om\in A^\RR_M$.}
Moreover $\|\tu\|_{\Chol(K_M,H^\infty(S_R))} \leq \frac{R_0-R}{4}$.
\end{theorem}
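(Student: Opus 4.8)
The plan is to solve equation~\eqref{eq:CMS} by a quadratically convergent (KAM-type) iteration carried out directly in the scale of Banach spaces $\Chol(K_M,H^\infty(S_{R'}))$, $R<R'\leq R_0$---the frequency playing the role of the $\gC^1$-holomorphic variable, most conveniently encoded through $q=E(\om)\in K_M$. Writing $D_\om u\defeq u(\cdot+\om)-2u(\cdot)+u(\cdot-\om)$ and letting $P_0$ denote the projection deleting the $\th$-average, I would first seek a zero-mean solution of the \emph{modified} equation $D_\om u=\varepsilon\,P_0[f(\cdot+u)]$, whose right-hand side automatically has zero mean, and remove the modification at the end.

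The core of the argument, and the step I expect to be the main obstacle, is a quantitative inversion lemma for~$D_\om$ in the $\gC^1$-holomorphic category. In Fourier modes $D_\om$ multiplies $e^{2\pi i k\th}$ by $\la_k(\om)=e^{2\pi i k\om}-2+e^{-2\pi i k\om}=q^{k}-2+q^{-k}=-4\sin^2(\pi k\om)$, so on zero-mean functions $D_\om^{-1}$ multiplies by $\la_k(\om)^{-1}=q^{k}/(q^{k}-1)^2$, a rational function of~$q$ that vanishes at $q=0$ and $q=\infty$ and whose poles, the $k$-th roots of unity, lie off~$K_M$. The key estimate is the uniform lower bound $|\la_k(\om)|\geq c(\tau)\,M^{-4}\,|k|^{-(4+4\tau)}$ for $\om\in A^\CC_M$: starting from $|\la_k(\om)|=4\bigl(\sin^2(\pi k\,\RE\om)+\sinh^2(\pi k\,\IM\om)\bigr)$, I would treat the case where $\pi|k|\,|\IM\om|$ is not too small via the growth of~$\sinh$, and the case where $|\IM\om|$ is small by using that $\RE\om$ then lies within $|\IM\om|$ of a real Diophantine frequency~$\om_*$ (by the very definition of~$A^\CC_M$), together with the elementary bound $|\sin\pi k\om_*|\geq 2\,\dist(k\om_*,\ZZ)\geq 2/(M|k|^{1+\tau})$ and a Lipschitz comparison in~$\RE\om$; the exponent~$4$, rather than~$2$, reflects the $45^\circ$ opening of the cones defining~$A^\CC_M$. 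Combining this pointwise bound with Cauchy estimates in~$\th$---responsible for a genuine loss of analyticity width---and with divided-difference estimates in~$q$, and feeding everything into the defining norm of $\Chol$, one obtains: for every $R''<R'$, $D_\om^{-1}$ defines an element of $\Chol\bigl(K_M,\cL(H^\infty(S_{R'}),H^\infty(S_{R''}))\bigr)$ of norm bounded by a fixed power of~$M$ times a negative power of $R'-R''$.

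The remaining ingredients are routine. Because $\Chol(K_M,\,\cdot\,)$ is a Banach algebra, and because post-composing the analytic function~$f$ with a small $\gC^1$-holomorphic perturbation of the identity is again $\gC^1$-holomorphic in~$q$ (chain rule, using $f'$ and, for the derivative estimates, $f''$, both controlled by $\|f\|_{R_0}\leq K$ and $\|f''\|_{R_0}\leq K$ via Cauchy estimates), the maps $\tu\mapsto P_0[f(\cdot+\tu)]$ and $h\mapsto P_0[f'(\cdot+\tu)\,h]$ are smooth and bounded on the ball $\{\|\tu\|\leq(R_0-R)/4\}$, with bounds depending only on~$K$; on that ball the argument $\th+\tu$ stays well inside the domain of~$f$. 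I would then run the iteration from $u_0=0$, with strips $R_0>R_1'>R_2'>\cdots\downarrow R$ whose gaps decrease geometrically, correcting the current approximation by applying $D_\om^{-1}$ to the (zero-mean) residual; the quadratic structure forces the residuals to decrease like the squares of their predecessors, up to the polynomial-in-$M$ and negative-powers-of-the-gaps constants above, so that the width losses sum to less than $R_0-R$. Carrying the small-divisor constant~$M^{4}$ and the width losses through this scheme produces exactly the smallness condition $\varepsilon<c_0(\tau,K,R,R_0)\,M^{-8}$, the extra powers of~$M$ coming from the divided-difference estimates that enforce $\gC^1$-holomorphy in the frequency. The iteration converges in $\Chol(K_M,H^\infty(S_R))$ to a zero-mean~$\tu$ with $\|\tu\|\leq(R_0-R)/4$; reality of the iterates for $\om\in A^\RR_M$ and $\th\in\RR/\ZZ$ is inherited from $u_0=0$ at each step; and $u\defeq\tu\circ E$ then solves the modified equation for $\om\in A^\CC_M$, understood pointwise wherever $\th\pm\om\in S_R$.

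Finally, it remains to check that the solution of the modified equation actually satisfies~\eqref{eq:CMS}, i.e.\ that $\mean{f(\cdot+u)}=0$. I would deduce this from the variational identity obtained by multiplying the modified equation by~$\pth u$ and integrating over a period: $1$-periodicity together with the substitutions $\th\mapsto\th\pm\om$ gives $\int_0^1(\pth u)(\th)\,u(\th\pm\om)\,d\th=\mp\int_0^1(\pth u)(\th)\,u(\th\mp\om)\,d\th$, whence $\int_0^1(\pth u)\,D_\om u\,d\th=0$ (and likewise $\int_0^1(\pth u)\,u\,d\th=0$); on the other hand, writing $f=F'$ with~$F$ $1$-periodic (possible since~$f$ has zero mean), the identity $\pth\bigl[F(\th+u(\th))\bigr]=f(\th+u(\th))\bigl(1+\pth u(\th)\bigr)$, integrated over a period, gives $\int_0^1(\pth u)\,f(\cdot+u)\,d\th=-\mean{f(\cdot+u)}$. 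Comparing the two yields $\varepsilon\,\mean{f(\cdot+u)}=0$. These identities hold for real~$\om$ and, by the contour-shift argument, for complex~$\om$ with $|\IM\om|$ below the strip width, and the resulting relation $\mean{f(\cdot+u)}=0$ then extends to all of~$A^\CC_M$ by $\gC^1$-holomorphic continuation in~$q$. Hence $P_0[f(\cdot+u)]=f(\cdot+u)$ and~\eqref{eq:CMS} holds, the bound $\|\tu\|_{\Chol(K_M,H^\infty(S_R))}\leq(R_0-R)/4$ having been secured along the way.
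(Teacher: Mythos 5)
First, a point of comparison that matters: the paper does not prove Theorem~\ref{thm:uniform} at all --- it is imported (up to the minor adjustments discussed in the remark following it) from \cite{CMS}, so your proposal can only be measured against the proof in that reference. Your overall strategy does match it in outline: a KAM-type iteration carried out in the spaces $\Chol(K_M,H^\infty(S_{R'}))$ with the frequency entering through $q=E(\om)$, and your final mean-removal step (multiplying the modified equation by $\pth u$, integrating over a period, and using $F'=f$ with $F$ periodic, then extending from $|\IM\om|$ small by quasi-analyticity) is the classical variational identity and is carried out correctly.

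However, the central step of your scheme has a genuine gap. You propose to correct each approximation ``by applying $D_\om^{-1}$ to the (zero-mean) residual'' and assert that ``the quadratic structure forces the residuals to decrease like the squares of their predecessors.'' With that correction the new residual $r_{n+1}$ equals $-\eps P_0[f'(\cdot+u_n)h_n]+O(h_n^2)$ where $h_n=D_\om^{-1}P_0 r_n$, which is \emph{linear} in $r_n$, with a factor $\eps$ times the small-divisor loss $C M^{a}\de_n^{-b}$ incurred by $D_\om^{-1}$ on a strip shrunk by $\de_n$; since $\sum_n\de_n\le R_0-R$ forces $\de_n\to0$, the per-step factor $\eps C M^{a}\de_n^{-b}$ eventually exceeds $1$ for any fixed $\eps>0$, and the iteration does not close. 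Genuine quadratic convergence requires (approximately) inverting the full linearized operator $D_\om-\eps f'(\cdot+u_n)\,\cdot$, e.g.\ via the classical twist-map device: differentiating the approximate equation in $\th$ shows that $1+\pth u_n$ is an approximate solution of the homogeneous linearized equation, which allows one to factor the linearized operator into two first-order difference operators, each inverted with the same small divisors. This ingredient (or an equivalent) is indispensable and is absent from your plan --- it is also where the hypothesis on $\|f''\|_{R_0}$ genuinely enters. Secondary issues: your small-divisor bound is misstated; on $A^\CC_M$ one has $|\la_k(\om)|=4\bigl(\sin^2(\pi k\RE\om)+\sinh^2(\pi k\IM\om)\bigr)\ge c\,M^{-2}|k|^{-2-2\tau}$, the squaring coming from $\la_k=-4\sin^2(\pi k\om)$ and not from the $45^\circ$ cones (which only guarantee that the real Diophantine bound survives complexification), so the claimed provenance of the threshold $\eps<c_0M^{-8}$, the uniform $\Chol$ estimates near the charts $q=0,\infty$ (i.e.\ $\IM\om\to\pm\infty$), and the precise constants $(R_0-R)/4$ and $c_0(\tau,K,R,R_0)$ are asserted rather than derived. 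As a blueprint the plan points in the right direction, but as a proof of the quoted theorem it is incomplete at its core.
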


\begin{remark}
Actually the statement above differs from the one in \cite{CMS} for a couple of minor aspects. Indeed, in  \cite{CMS} the function $\ti{u}$ is thought as an element of the space
$\Chol(A^\CC_M,H^\infty(S_R\times \mathbb{D}_\rho))$, with $\rho=c_0M^{-8}$, while  here we are only using the result for fixed~$\varepsilon$.

Moreover in the statement of Theorem 1 in \cite{CMS} the constant $c_0$ depends on~$f$. {However,}
analysing the proof one realizes that, for the iterative
scheme to work,  the constant $c_0$ can be determined  only {in terms of}
 $\|f\|_{R_0}$ and $\|f''\|_{R_0}$, 
{and does not actually depend on the \emph{specific}} choice of~$f$
(see  in particular the remark in \cite{CMS} on p.~2053, a few lines before \textsection~4.2).
The last estimate in Theorem~\ref{thm:uniform} does not appear in the
statement in \cite{CMS}, but is a by-product\footnote{In \cite{CMS} the authors use the notation $\|\tu\|_R$ rather than  $\|\tu\|_{\Chol(K_M,H^\infty(S_R))}$} of the proof of Lemma~19 in \cite{CMS}, on p.~2057.
\end{remark}

Let us rephrase the result getting rid of the parameter $\varepsilon$:

\begin{corollary}\label{cor:uxg}
Suppose $0<R<R_1$. 
Then there is $c_1=c_1(\tau, R, R_1)$ such that for any $M> 2
\zeta(1+\tau)$,  and for any  $g:  \RR \to \RR$ 1-periodic with zero
mean which extends holomorphically {to a neighbourhood of~$\ov{S_{R_1}}$} with  $ \|g\|_{R_1} < c_1M^{-8} $,
there exists $\ti{u}=\ti{u}_{M}\in \Chol(K_M,H^\infty(S_R))$ with zero mean, such that 
$u:=\tu \circ E$  (where $E(\om):= e^{2\pi i\om}$) satisfies
\begin{equation}\label{eq:CMSg}
u(\th+\om , \om) - 2 u(\th, \om) + u(\th-\om, \om) = g\big( \th + u(\th), \om \big)
\end{equation}
for all $\theta \in S_R$ and $\om \in A^\CC_M$ such that $\theta
\pm \om \in S_R$,
{and $u(\th,\om)\in\R$ if $\th\in\R/\Z$ and $\om\in A^\RR_M$.}
\end{corollary}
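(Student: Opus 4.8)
The plan is to deduce Corollary~\ref{cor:uxg} from Theorem~\ref{thm:uniform} by a simple rescaling that absorbs the size of~$g$ into the parameter~$\varepsilon$. First I would recall that the hypotheses of Theorem~\ref{thm:uniform} control \emph{both} $\|f\|_{R_0}$ and $\|f''\|_{R_0}$ by the constant~$K$, while the hypothesis on~$g$ in the corollary involves only $\|g\|_{R_1}$; the point is that a Cauchy estimate on a slightly smaller strip controls the second derivative in terms of the sup norm. So I would fix an intermediate radius, say $R_0 \defeq \tfrac{1}{2}(R+R_1)$, so that $0<R<R_0<R_1$, and note that since $g$ extends holomorphically to a neighbourhood of $\ov{S_{R_1}}$, Cauchy's inequality gives $\|g''\|_{R_0} \le C_2 \|g\|_{R_1}$ with $C_2 = C_2(R_0,R_1)$ depending only on $R_1-R_0$ (hence only on $R,R_1$).

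Next I would perform the rescaling. Write $g = \varepsilon f$ where I \emph{choose} $\varepsilon \defeq \|g\|_{R_1}$ (if $g\equiv 0$ the statement is trivial, so assume $\varepsilon>0$) and $f \defeq g/\varepsilon$. Then $f$ is 1-periodic with zero mean, extends holomorphically to a neighbourhood of $\ov{S_{R_0}}$, and satisfies $\|f\|_{R_0} \le \|f\|_{R_1} = 1$ and $\|f''\|_{R_0} = \varepsilon^{-1}\|g''\|_{R_0} \le C_2\,\varepsilon^{-1}\|g\|_{R_1} = C_2$. Hence $\max\{\|f\|_{R_0},\|f''\|_{R_0}\} \le K \defeq \max\{1,C_2\}$, a constant depending only on $R$ and $R_1$. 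Theorem~\ref{thm:uniform} then supplies $c_0 = c_0(\tau,K,R,R_0) = c_0(\tau,R,R_1)$ such that, for all $M>2\zeta(1+\tau)$ and all $0<\varepsilon<c_0M^{-8}$, there is $\ti u = \ti u_{\varepsilon,M} \in \Chol(K_M,H^\infty(S_R))$ with zero mean solving~\eqref{eq:CMS}; substituting $\varepsilon f = g$ turns~\eqref{eq:CMS} into~\eqref{eq:CMSg}, and the reality clause and the bound $\|\ti u\|_{\Chol(K_M,H^\infty(S_R))}\le \tfrac{R_0-R}{4}$ carry over verbatim.

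Finally I would set $c_1 \defeq c_0(\tau,R,R_1)$ and observe that the corollary's hypothesis $\|g\|_{R_1} < c_1 M^{-8}$ is \emph{precisely} the condition $\varepsilon = \|g\|_{R_1} < c_0 M^{-8}$ needed to invoke Theorem~\ref{thm:uniform} with the chosen $f$ and $\varepsilon$; the resulting $\ti u = \ti u_M \defeq \ti u_{\varepsilon,M}$ is the desired solution, and $u = \ti u\circ E$ solves~\eqref{eq:CMSg} on the stated domain. There is no real obstacle here; the only point requiring a (routine) estimate is the Cauchy bound $\|g''\|_{R_0} \lesssim \|g\|_{R_1}$, which legitimizes replacing the two-sided hypothesis of Theorem~\ref{thm:uniform} by the single sup-norm smallness condition on~$g$, and the verification that the constant $c_0$ produced by Theorem~\ref{thm:uniform} is independent of the specific~$f$ — a fact already noted in the remark following that theorem.
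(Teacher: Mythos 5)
Your proposal is correct and follows essentially the same route as the paper: fix the intermediate strip $R_0=\tfrac{1}{2}(R+R_1)$, use a Cauchy estimate to bound $\|g''\|_{R_0}$ by $\|g\|_{R_1}$, and invoke Theorem~\ref{thm:uniform} together with the remark that $c_0$ depends only on the bound $K$ and not on the specific $f$. The only cosmetic difference is the normalization: you take $\varepsilon=\|g\|_{R_1}$ (so $\|f\|_{R_0}\le 1$, with the trivial case $g\equiv 0$ set aside), whereas the paper fixes $\varepsilon=c_1M^{-8}$ with $c_1=c_0/2$ and $f=\tfrac{M^8}{c_1}g$; both choices verify the hypotheses of Theorem~\ref{thm:uniform} in the same way.
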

\begin{proof}
Let $R_0:=\frac{R_1+R}{2}$ and $K:=\max\{1, \frac{2}{\pi(R_0-R)^2}
\}$. Cauchy inequalities yield
$\|g''\|_{R_0}\leq\frac{2}{\pi(R_0-R)^2} \|g\|_{R_1}$, therefore 
$$\max \{ \|g\|_{R_0}, \|g''\|_{R_0}\} \leq K \|g\|_{R_1}.$$

Let us set $c_1:= c_0/2$  (for $c_0=c_0(\tau, K, R, R_0)$ as in Theorem~\ref{thm:uniform}), and note that  $f:=\frac{M^8}{c_1}g$ is such that
$$\max \{ \|f\|_{R_0}, \|f''\|_{R_0}\} \leq \frac{M^8}{c_1}\max \{ \|g\|_{R_0}, \|g''\|_{R_0}\} \leq  \frac{M^8}{c_1} K \|g\|_{R_1}\leq K.$$
Therefore, choosing $\varepsilon=c_1 M^{-8}$ and $g=\varepsilon f$, Corollary~\ref{cor:uxg}
immediately follows from Theorem~\ref{thm:uniform}.
\end{proof}

\begin{remark}
From the definition of the function spaces in \cite{CMS} we deduce that not only $\ti{u} \in \Chol(K_M,H^\infty(S_R))$, but $\ti{u}$ admits a normally convergent Fourier expansion
\begin{equation}\label{eq:fou}
\ti{u}(q, \cdot) =\sum_k \hat{u}_k(q) e_k \ \ \mbox{ with }\ \  \
\begin{cases}
\hat{u}_k \in \Chol(K_M) \\ 
e_k(\theta):=e^{2\pi i k\theta} 
\end{cases}
\end{equation}
Moreover (\cf\ \cite{CMS}, Definition 3.2) also
\begin{equation}\label{eq:strip}
\sum_k q^k\hat{u}_k(q) e_k \ \ \mbox{ and } \ \ \sum_k q^{-k} \hat{u}_k(q) e_k
\end{equation}
converge normally in $\Chol(K_M,H^\infty(S_R))$ and (\cf\ \cite{CMS}, Definition 3.3)
\begin{equation}\label{eq:conjugate}
\ov{\hat{u}_k(q)}=\hat{u}_{-k}(1/\ov{q})
\end{equation}
\end{remark} 

\begin{lemma}   \label{lem:upties}
The function $u = \tu \circ E$ of Corollary~\ref{cor:uxg} is
  $1$-periodic in $\om$, it belongs to the space
  $\Chol(A^\CC_M,H^\infty(S_R))$, and it satisfies
$$u(\theta, -\om)=u(\theta, \om), \qquad \ov{u(\theta,\om)}=u(\ov{\theta}, \ov{\om}).$$
\end{lemma}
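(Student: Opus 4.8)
The plan is to derive all three properties of $u = \tu \circ E$ directly from the corresponding properties established in \cite{CMS} for the auxiliary function $\tu \in \Chol(K_M, H^\infty(S_R))$, together with Lemma~\ref{l:xxx} and the uniqueness of $1$-periodic solutions to equation~(\ref{eq:u2}a).

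\begin{proof}[Proof of Lemma~\ref{lem:upties}]
Since $E(\om+1) = E(\om)$, the function $u = \tu \circ E$ is automatically $1$-periodic in~$\om$. For the $\Chol$-regularity, apply Lemma~\ref{l:xxx} with $B = H^\infty(S_R)$ and $A = A^\CC_M$: indeed $K_M$ is the closure of $E(A^\CC_M)$ in~$\PP$ by~\eqref{eqdefKMPP}, so $\tu \in \Chol(K_M, H^\infty(S_R))$ yields $u = \tu \circ E \in \Chol(A^\CC_M, H^\infty(S_R))$ with $\|u\|_{\Chol(A^\CC_M, H^\infty(S_R))} \le 2\pi e^{2\pi} \|\tu\|_{\Chol(K_M, H^\infty(S_R))}$.

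For the symmetry $u(\theta, -\om) = u(\theta,\om)$, observe that if $u(\cdot,\om)$ solves~(\ref{eq:u2}a), i.e.\ $u(\th+\om,\om) - 2u(\th,\om) + u(\th-\om,\om) = g(\th + u(\th,\om))$, then the function $\th \mapsto u(\th, -\om)$ solves the same equation with $\om$ replaced by~$-\om$, simply because the left-hand side is invariant under the exchange of the roles of the $+\om$ and $-\om$ shifts. Both $u(\cdot,\om)$ and $u(\cdot,-\om)$ are $1$-periodic in~$\th$ with zero mean and small sup-norm (bounded by $\tfrac{R_0-R}{4}$, or by the analogous bound in Corollary~\ref{cor:uxg}), so by the uniqueness of such a solution --- which is part of the construction in \cite{CMS}, the iterative scheme producing a unique fixed point in a small ball --- we get $u(\th,-\om) = u(\th,\om)$ for all $\th \in S_R$ and $\om \in A^\CC_M$ with $\th \pm \om \in S_R$, hence on the whole domain by analytic/$\Chol$-continuation. (Alternatively, this follows by feeding the relation $\hat{u}_k(q) = \hat{u}_k(1/q)$ type symmetry of~$\tu$, if available in \cite{CMS}, through $E(-\om) = 1/E(\om)$; but the uniqueness argument is cleaner.)

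For the reality/conjugation property $\ov{u(\theta,\om)} = u(\ov\theta, \ov\om)$, start from~\eqref{eq:fou}: $\tu(q,\cdot) = \sum_k \hat u_k(q) e_k$ with $\hat u_k \in \Chol(K_M)$, and use~\eqref{eq:conjugate}, $\ov{\hat u_k(q)} = \hat u_{-k}(1/\ov q)$. Writing $q = E(\om) = e^{2\pi i \om}$, we have $1/\ov q = e^{2\pi i \ov\om} = E(\ov\om)$, so
\[
\ov{u(\th,\om)} = \sum_k \ov{\hat u_k(E(\om))}\, \ov{e_k(\th)}
= \sum_k \hat u_{-k}(E(\ov\om))\, e_{-k}(\ov\th)
= \sum_j \hat u_j(E(\ov\om))\, e_j(\ov\th) = u(\ov\th, \ov\om),
\]
after relabelling $j = -k$ and using $\ov{e_k(\th)} = e^{-2\pi i k \ov\th} = e_{-k}(\ov\th)$; the rearrangement is justified by the normal convergence of the Fourier series in $\Chol(K_M, H^\infty(S_R))$. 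Note $E(\ov\om) \in K_M$ since $A^\CC_M$ is invariant under complex conjugation by~\eqref{eq:AMC}, so the right-hand side makes sense.

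The only delicate point is the uniqueness statement invoked for the even-in-$\om$ property: one must make sure that the solution furnished by Corollary~\ref{cor:uxg} is \emph{the} solution in the relevant small ball of $1$-periodic zero-mean functions, so that two solutions differing only by the sign of~$\om$ must coincide. This is exactly the fixed-point uniqueness built into the KAM-type iteration of \cite{CMS}, so no new work is needed --- but it should be cited precisely. Everything else is bookkeeping with Fourier series and the composition with~$E$.
\end{proof}
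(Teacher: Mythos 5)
Your treatment of the periodicity, of the $\Chol(A^\CC_M,H^\infty(S_R))$ membership via Lemma~\ref{l:xxx}, and of the conjugation symmetry via the Fourier expansion \eqref{eq:fou} and the relation \eqref{eq:conjugate} is correct and is exactly the route the paper takes (the paper merely states the last step, you supply the computation). The gap is in the evenness property $u(\theta,-\om)=u(\theta,\om)$. You argue that $u(\cdot,-\om)$ and $u(\cdot,\om)$ both solve \eqref{eq:CMSg} at frequency $\om$ (correct, since the second-difference operator is even in $\om$), and then invoke ``the fixed-point uniqueness built into the KAM-type iteration of \cite{CMS}'' to conclude equality for \emph{all} $\om\in A^\CC_M$, finishing with ``analytic/$\Chol$-continuation.'' Neither step is supported: the uniqueness statement available in \cite{CMS} (footnote 6, p.~2038) is a uniqueness statement for the normalized solution at a fixed \emph{real} Diophantine frequency, where the dynamical interpretation (minimal configurations on an invariant curve) is available; no cited result asserts that, for a fixed \emph{complex} $\om$, the functional equation has a unique zero-mean solution in a small ball, nor that $u(\cdot,-\om)$ is the fixed point of the same iteration. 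You yourself flag this as ``the only delicate point'' but then assert ``no new work is needed,'' which is precisely what is not justified.

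Moreover, ``analytic continuation'' is not available here: elements of $\Chol(A^\CC_M, H^\infty(S_R))$ are only $\gC^1$-holomorphic (monogenic), $A^\RR_M$ has empty interior and sits on the boundary of the interior of $A^\CC_M$, so agreement on $A^\RR_M$ does not propagate by ordinary analyticity. The paper's argument is: set $u^*(\theta,\om):=u(\theta,-\om)$, note $u^*\in\Chol(A^\CC_M,H^\infty(S_R))$ and that it solves \eqref{eq:u2}; use the real-frequency uniqueness of \cite{CMS} to get $u|_{A^\RR_M}=u^*|_{A^\RR_M}$; and then invoke the $\gH^1$-quasi-analyticity property of these spaces established in \cite{MS2} (sets of positive $\gH^1$-measure, such as $A^\RR_M$, are uniqueness sets) to conclude $u=u^*$ on all of $A^\CC_M$. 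That quasi-analyticity theorem is the missing ingredient in your proof; with it cited in place of ``continuation,'' and with the uniqueness claim restricted to real $\om$, your argument becomes the paper's.
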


\begin{proof}
The periodicity of~$u$ follows from the periodicity of~$E(\om)=e^{2\pi i \om}$ and its
$\gC^1$-holomorphy from Lemma~\ref{l:xxx}. 
By construction, $\om \in A^\CC_M \iff -\om \in A^\CC_M$, so setting
$u^*\thom \defeq u(\theta, -\om)$, it is easy to check that
$u^* \in \Chol(A^\CC_M, H^\infty(S_R))$.
Now, $u^*$ is clearly a solution to~\eqref{eq:u2}. Thus, by the
uniqueness argument of \cite{CMS} (see footnote 6 on p.~2038), 
we get $u_{|A^\RR_M}=u^*_{|A^\RR_M}$, hence, by the quasi-analyticity
argument of \cite{MS2}, $u=u^*$.
From~\eqref{eq:conjugate}, it follows that
$\ov{u(\theta,\om)}=
u(\ov{\theta}, \ov{\om})$.
\end{proof}

\smallskip

\section{Proof of Theorem~\ref{th:main}}   \label{sec:pfThmmain}

{We now give ourselves $R_1>0$ and define $c \defeq \big(2
\zeta(1+\tau)\big)^{-8} c_1$, with
$R\defeq R_1/2$ and $c_1=c_1(\tau,R,R_1)$ as in
Corollary~\ref{cor:uxg}.
We suppose that~$g$ and~$M$ satisfy the assumptions of
Theorem~\ref{th:main} with this value of~$c$.
We must find a function~$\bc$ satisfying all the claims of Theorem~\ref{th:main}.}

{Among our assumptions, we have 
  $1 < \frac{M}{2 \zeta(1+\tau)} < \big( \frac{c}{\|g\|_{R_1}}
  \big)^{1/8}$,
therefore $M>2 \zeta(1+\tau)$ and 
$\frac{\|g\|_{R_1}}{c} < \big(2\zeta(1+\tau)\big)^{8} M^{-8}$, 
whence $\|g\|_{R_1} < c_1 M^{-8}$.
We can thus apply Corollary~\ref{cor:uxg} and use the function $u = \tu\circ E$
satisfying equation~\eqref{eq:CMSg} as well as the properties
described in Lemma~\ref{lem:upties}.}

\smallskip

From now on, if $\ti{\varphi}\in \Chol(K_M,H^\infty(S_r))$ has Fourier expansion $\ti{\varphi}(\theta, q)=\sum_k \hat\ph_k(q)e_k(\theta)$ we define
$$\ti{\varphi}^\pm(\theta, q)=\sum_k q^{\pm k}\hat\ph_k(q)e_k(\theta).$$
Note that, by~\eqref{eq:strip}, $\ti{u}^\pm$ both belong to $\Chol(K_M, H^\infty (S_r))$.
Moreover, if $\varphi := \ti{\varphi} \circ E \in \Chol(A^\CC_M, H^\infty (S_r))$, by a slight abuse of notation  we denote
by~$\varphi^\pm=\ti{\varphi}^\pm\circ E$, which boils down to
$\varphi^\pm\thom=\varphi(\theta\pm\om, \om)$.  
Moreover 
we set
\[
v \defeq u -u^-, \quad
U(\th, \om) \defeq \theta + u\thom, \quad 
V\thom \defeq \om + v\thom.
\]
Since $\ti{u}^\pm \in \Chol(K_M, H^\infty (S_r))$ we get that
 $u^\pm$ both belong to $\Chol(A^\CC_M, H^\infty (S_r))$, and the same is true for $v$.
\begin{lemma}\label{L:betilda}
The formula
\begin{equation}\label{eq:betilda}
\begin{split}
\bc(\om)&:= \int_0^1\Big[\frac{1}{2} V(\theta,\om)^2+G\big(U\thom\big)\Big] d\theta \\
&=\frac{1}{2}\om^2+\int_0^1\Big[\frac{1}{2} v(\theta,\om)^2+G(\theta+u(\theta,\om))\Big] d\theta
\end{split}
\end{equation}
defines a function~$\bc$ which can be written in the form
$\bc(\om) = \frac{\om^2}{2} + \Phi_\be^\C(\om)$ with $\Phi_\be^\C  \in
\Chol(A^\CC_M)$; 
in fact, $\Phi_\be^\C = \ti\Phi \circ E$ with $\ti\Phi\in \Chol(K_M)$.
Moreover,  
\begin{equation}\label{eq:biprime}
\frac{d\bc}{d\om}=\int_0^1 V(\theta,\om)\partial_\theta U(\theta, \om) d\theta =\om + \ioi v(\th,\om) \partial_\th u\thom d\theta.
\end{equation}

\end{lemma}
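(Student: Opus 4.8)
The plan is to verify the three assertions in turn, exploiting the function-space machinery already set up. First, for the structural claim that $\bc(\om) = \frac{\om^2}{2} + \Phi^\C_\be(\om)$ with $\Phi^\C_\be = \ti\Phi\circ E \in \Chol(A^\CC_M)$: I would argue that the spaces $\Chol(K_M, H^\infty(S_r))$ and $\Chol(K_M)$ are Banach algebras (stable under multiplication, since $H^\infty(S_r)$ and $\CC$ are Banach algebras), so $v^2 \in \Chol(K_M, H^\infty(S_r))$. For the term $G(\theta + u(\theta,\om))$, I would note that $G$ is real analytic 1-periodic, hence expands in an absolutely convergent Fourier series $G(x) = \sum_\ell \hat G_\ell e^{2\pi i \ell x}$ with exponentially decaying coefficients; composing with $U = \theta + u$ gives $G(U) = \sum_\ell \hat G_\ell e^{2\pi i \ell \theta} \big(e^{2\pi i \ell u}\big)$, and since $\|\tu\|_{\Chol(K_M, H^\infty(S_R))} \le (R_0-R)/4$ is small, each $e^{2\pi i \ell u(\theta,\om)} = \sum_{j\ge 0}\frac{(2\pi i \ell u)^j}{j!}$ converges normally in $\Chol(K_M, H^\infty(S_r))$ with a bound growing at most like $e^{2\pi|\ell|\,\|u\|}$, which is dominated by the exponential decay of $\hat G_\ell$. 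Hence $G(U) \in \Chol(K_M, H^\infty(S_r))$ (one should shrink $r$ slightly below $R$ to absorb the shift $\theta \mapsto \theta + u(\theta,\om)$ into the strip, using that $\|u\|_R$ is small).

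Next, the integration step: I would invoke the fact that for any $\ti\varphi \in \Chol(K_M, H^\infty(S_r))$, the map $q \mapsto \int_0^1 \ti\varphi(\theta, q)\,d\theta$ lies in $\Chol(K_M)$ — this is because integration in $\theta$ is a bounded linear operator $H^\infty(S_r) \to \CC$ (it simply extracts the zeroth Fourier coefficient, up to the usual normalization), and bounded linear operators applied pointwise preserve $\gC^1$-holomorphy, with $\Om(\Lambda\circ\ti\varphi) = \Lambda\circ\Om\ti\varphi$ for $\Lambda$ linear bounded. Applying this to $\ti\varphi = \frac12 v^2 + G(U)$ — viewed through $E$, i.e. working with $q = E(\om)$ — yields $\ti\Phi \in \Chol(K_M)$ with $\Phi^\C_\be = \ti\Phi\circ E$, and then $\Phi^\C_\be \in \Chol(A^\CC_M)$ follows from Lemma \ref{l:xxx}. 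The identity between the two lines of \eqref{eq:betilda} is just the expansion $V^2 = (\om + v)^2 = \om^2 + 2\om v + v^2$ together with $\int_0^1 v(\theta,\om)\,d\theta = 0$ (since $\ti u$ has zero mean, so does $v = u - u^-$), plus $\int_0^1 \frac12 \om^2\,d\theta = \frac{\om^2}{2}$.

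For the derivative formula \eqref{eq:biprime}: here I would differentiate under the integral sign. The rigorous justification is that $\om \mapsto V(\cdot,\om)^2 + G(U(\cdot,\om))$ is $\gC^1$-holomorphic with values in $H^\infty(S_r)$, hence in particular holomorphic in the interior of $A^\CC_M$ and with a continuous derivative on all of $A^\CC_M$; differentiating the integrand gives $V\,\partial_\om V + g(U)\,\partial_\om U = V\,\partial_\om V + g(U)(1 + \partial_\om u)$. To bring this to the stated form I would use the specific structure: from $V = U' - $ (shift), and more concretely from $v = u - u^-$ together with equation \eqref{eq:CMSg}, one gets the telescoping/integration-by-parts identity $\int_0^1 V \partial_\om V\,d\theta = \int_0^1 (\partial_\theta V)(\cdots)$ type manipulation. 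Actually the cleaner route is to differentiate the \emph{first} expression $\int_0^1[\frac12 V^2 + G(U)]d\theta$ and use that $U,V$ parametrize an invariant curve: the Euler--Lagrange / stationarity of the action along the family of invariant curves means the only surviving term is the explicit $\om$-dependence, namely $\partial_\om\big(\frac12 V^2\big)$ contributes $V\partial_\om V$ and an integration by parts in $\theta$ using $\partial_\theta U = 1 + \partial_\theta u$ and the difference equation converts $\int V\partial_\om V$ into $\int V \partial_\theta U$ minus boundary terms that vanish by 1-periodicity. I expect \textbf{this last step — identifying $\frac{d\bc}{d\om}$ with $\int_0^1 V\,\partial_\theta U\,d\theta$} — to be the main obstacle, because it requires carefully tracking how $\partial_\om u$, $\partial_\theta u$ and the shifted quantities $u^\pm$ interact through the cohomological equation \eqref{eq:CMSg}; the differentiated form of that equation, $u'(\theta+\om) - 2u'(\theta) + u'(\theta-\om) + \partial_\theta u(\theta+\om) - \partial_\theta u(\theta-\om) = g'(\theta+u)(1+\partial_\theta u)$ where $' = \partial_\om$, will be the tool that makes the boundary/telescoping terms cancel. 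Once this identity is in hand, the second equality in \eqref{eq:biprime} is again just $V = \om + v$, $\partial_\theta U = 1 + \partial_\theta u$, expanding, and using $\int_0^1 \partial_\theta u\,d\theta = 0$ and $\int_0^1 v\,d\theta = 0$.
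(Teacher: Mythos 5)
The first half of your plan coincides with the paper's argument: the equality of the two lines of \eqref{eq:betilda} via $\ioi v\,d\theta=0$, the stability of the $\Chol$ spaces under multiplication (so $v^2$ stays in the space), and the fact that integration in $\theta$ is a bounded linear map sending $\Chol(A^\CC_M,H^\infty(S_R))$ into $\Chol(A^\CC_M)$ (the paper quotes this as \eqref{eqintegrChol}, i.e.\ Lemma~4 of \cite{CMS}; your ``bounded linear operators commute with $\Om$'' argument is the same point). For the composition $G(\theta+u)$ the paper simply cites Lemma~11 of \cite{CMS} (giving \eqref{eqGcircidtiu}) and then Lemma~\ref{l:xxx}; you instead re-derive it by expanding $G$ in Fourier series and exponentiating $u$. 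That route can be made to work, but be aware that the bound $\|e^{2\pi i\ell u}\|\le e^{2\pi|\ell|\,\|u\|}$ must be taken in a submultiplicative $\Chol(K_M,H^\infty(S_R))$-norm (the norm \eqref{eqdefnormCholC} used here is only \emph{equivalent} to a Banach-algebra norm), and one must account for the factor $\|e_\ell\|_{H^\infty(S_R)}=e^{2\pi|\ell|R}$ as well; so you are essentially reproving the quoted lemma, with some constants to track.

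The genuine gap is the derivative identity \eqref{eq:biprime}, which you yourself flag as ``the main obstacle'' and do not prove; neither of your two suggested routes is carried out, and as described they would not go through literally. There is no integration by parts in $\theta$ that converts $\ioi V\,\partial_\om V\,d\theta$ into $\ioi V\,\partial_\th U\,d\theta$, and the Euler--Lagrange/stationarity (envelope) heuristic cannot be invoked here: the formula $\be'(\om)=\ioi V\,\partial_\th U\,d\theta$ from \cite{Siburg} is only used afterwards, in Proposition~\ref{P:tb=b}, for the \emph{real} $\be$, whereas Lemma~\ref{L:betilda} must establish the identity for the complexified $\bc$ on $A^\CC_M$ by direct computation. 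The paper's computation is short and purely algebraic, and needs neither an integration by parts nor the $\om$-differentiated cohomological equation: by periodicity one first replaces $\ioi \demi V^2\,d\theta$ by $\ioi \demi (V^+)^2\,d\theta$; differentiating under the integral and using $V^+\thom = U(\th+\om,\om)-U\thom$, so that $\partial_\om V^+=(\partial_\th U)^+ +(\partial_\om U)^+ -\partial_\om U$, together with \eqref{eq:CMSg} in the form $g\circ U = V^+-V$, all terms containing $\partial_\om U$ cancel after using translation invariance of $\ioi\cdot\,d\theta$, leaving $\ioi V^+(\partial_\th U)^+\,d\theta=\ioi V\,\partial_\th U\,d\theta$. (One can also differentiate the second line of \eqref{eq:betilda} directly, but then the same shift/translation-invariance bookkeeping is required: one first obtains $\om+\ioi v^+\partial_\th u\,d\theta$ and must still check that $\ioi (v^+-v)\partial_\th u\,d\theta=\ioi g(\th+u)\,\partial_\th u\,d\theta=0$, using $\partial_\th\big(G(\th+u)\big)=g(\th+u)(1+\partial_\th u)$ and $\ioi(v^+-v)\,d\theta=0$.) Without this cancellation argument, \eqref{eq:biprime} — and hence part (i) of Theorem~\ref{th:main} — is not established.
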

\begin{proof}
  By periodicity of $u$ we immediately get that $\ioi v d\theta =0$,
  $\ioi V d\th = \om$, so the two expressions for $\bc$ above are
  equivalent. 

  The fact that 
\beglab{eqdefPhibeC}
\Phi_\be^\C(\om) \defeq \bc(\om) - \frac{\om^2}{2} =
\int_0^1\Big[\frac{1}{2} v(\theta,\om)^2+G(\theta+u(\theta,\om))\Big] d\theta
\edla
  belongs to $\Chol(A^\CC_M)$ is a consequence of the results in
  \cite{CMS}. Indeed, since $v\in \Chol(A^\CC_M, H^\infty (S_r))$, its
  square also belongs to that space,
%
and Lemma~11 of \cite{CMS} ensures that
\beglab{eqGcircidtiu}
G\circ (id+\ti{u})\in \Chol(K_M,H^{\infty}(S_R)), 
\edla
hence, by Lemma~\ref{l:xxx},
  $G\circ(id+u)= G\circ (id+\ti{u})\circ E \in
  \Chol(A^\CC_M,H^{\infty}(S_R))$. 
On
  the other hand, Lemma~4 of \cite{CMS} ensures that if
\beglab{eqintegrChol}
\phi \in \Chol(A^\CC_M,H^\infty(S_R)) \Imp
[\om \mapsto \ioi \phi\thom d\theta]\in \Chol(A^\CC_M).
\edla


Now we can write
$$\bc(\om)=\ioi \frac{1}{2} V^2 d\th + \ioi G \circ U d\th = \ioi \frac{1}{2} (V^+)^2 d\th + \ioi G \circ U d\th$$ thus
\begin{equation}\label{eq:trick}
\frac{d\bc}{d\om}=\ioi V^+ \partial_\om V^+ d\th + \ioi (g\circ U)\partial_\om U d\th.
\end{equation}
Since $V^+\thom =U(\th+\om,\om)-U\thom$ we get that 
\begin{equation}\label{eq:domv}
\partial_\om V^+=(\partial_\th U )^+ +(\partial_\om U)^+ - \partial_\om U
\end{equation} 
Moreover by \eqref{eq:CMSg} we get that $g\circ U =u^+-2u+u^-=V^+-V$ so that
\begin{equation}\label{eq:gou}
\ioi (g\circ U)\partial_\om U d\th=\ioi (V^+-V)\partial_\om U d\th
\end{equation}  thus
\begin{equation}\label{eq:final}
\begin{split}
\frac{d\ti\be}{d\om}&=\ioi V^+ (\partial_\th U)^+ d\th + \ioi V^+[( \partial_\om U)^+ -\partial_\om U] d\th  + \ioi (g\circ U)\partial_\om U d\th\\
&= \ioi V^+ (\partial_\th U)^+ d\th = \ioi V \partial_\th U d\th
\end{split}
\end{equation}
where the first equality follows from equation \eqref{eq:domv} while equation \eqref{eq:gou} allows us to pass from the first line to the second; and  translation invariance has been used several times as well.

We have $\Phi_\be^\C= \ti\Phi \circ E$ with
\beglab{eqdeftiPhi}
\ti\Phi(q):=\ioi \Big[ \demi \big(\ti{u}(\theta,q)-
\ti{u}^-(\theta,q)\big)^2+G\big(\theta+\ti{u}(\theta,q)\big) \Big] d\theta.
\edla
Using $\ti u, \ti u^- \in\Chol(K_M,H^\infty(S_R))$, the stability of this space under
multiplication, and \eqref{eqGcircidtiu}--\eqref{eqintegrChol}, we
obtain $\ti\Phi \in \Chol(K_M)$.
\end{proof}

\begin{proposition}\label{P:tb=b}
The function $\bc$ defined in Lemma~\ref{L:betilda} coincides with Mather's $\be$-function on the real line. In fact,
\begin{equation}\label{eq:tb=b}
(i)  \ \bc|_{A^\RR_M}=\be|_{A^\RR_M} \qquad
(ii)  \ \bc'|_{A^\RR_M}=\be'|_{A^\RR_M}
\end{equation}
\end{proposition}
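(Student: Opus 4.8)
The plan is to prove (i) by identifying, over each real Diophantine frequency, the complex function $u=\tu\circ E$ with a genuine real-analytic parametrization of a $T_g$-invariant curve, and then to derive (ii) from (i) together with the $\gC^1$-holomorphic structure of $\bc$ and the classical differentiability of $\be$ at irrational points.

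For (i), fix $\om\in A^\RR_M$. First I would note that, by the reality property in Lemma~\ref{lem:upties}, $\ov{u(\theta,\om)}=u(\ov\theta,\ov\om)$, so $u(\theta,\om)\in\R$ whenever $\theta\in\R$, while $u(\cdot,\om)\in H^\infty(S_R)$ is $1$-periodic and holomorphic. Since $\om$ is real we have $\theta\pm\om\in S_R$ for every real~$\theta$, hence \eqref{eq:CMSg} holds along the real axis and there reduces exactly to equation~(\ref{eq:u2}a); thus $u(\cdot,\om)$ is a real-analytic $1$-periodic solution of~(\ref{eq:u2}a), and, with $v=u-u^-$, $U=\theta+u$, $V=\om+v$ as above, the pair $(U(\cdot,\om),V(\cdot,\om))$ is a parametrized invariant curve of rotation number~$\om$ in the sense of~\eqref{eq:conj}. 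Since $\om$ is Diophantine, hence irrational, the discussion surrounding~\eqref{eq:averaging} applies verbatim: $\bigl(U(j\om,\om)\bigr)_{j\in\Z}$ is a minimal configuration of rotation number~$\om$, and Birkhoff's ergodic theorem for the uniquely ergodic rotation by~$\om$ yields $\be(\om)=\demi\int_\TT|V(\theta,\om)|^2\,d\theta+\int_\TT G\bigl(U(\theta,\om)\bigr)\,d\theta$. As $V(\cdot,\om)$ is real on the real axis, $|V|^2=V^2$, and the right-hand side is precisely the value $\bc(\om)$ prescribed by~\eqref{eq:betilda}; this proves (i).

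For (ii), I would argue by approximation through neighbouring Diophantine frequencies, using that $A^\RR_M$ is perfect. Fix $\om_0\in A^\RR_M$ and choose $\om_n\in A^\RR_M\setminus\{\om_0\}$ with $\om_n\to\om_0$. By Lemma~\ref{L:betilda} we have $\bc=\tfrac{\om^2}{2}+\Phi^\C_\be$ with $\Phi^\C_\be\in\Chol(A^\CC_M)$; hence the divided-difference function $\Om\bc$, which is continuous on $A^\CC_M\times A^\CC_M$ and equals $\bc'$ on the diagonal, gives $\dfrac{\bc(\om_n)-\bc(\om_0)}{\om_n-\om_0}=\Om\bc(\om_n,\om_0)\to\bc'(\om_0)$. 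On the other hand, by (i) the same difference quotient equals $\dfrac{\be(\om_n)-\be(\om_0)}{\om_n-\om_0}$, and since $\om_0$ is irrational, $\be$ is differentiable at~$\om_0$ by the general theory, so this quotient tends to $\be'(\om_0)$. Therefore $\bc'(\om_0)=\be'(\om_0)$, which is (ii). (Alternatively one could compare the explicit formula~\eqref{eq:biprime} with a known expression for the derivative of $\be$ along an invariant curve, but the divided-difference argument is shorter and self-contained.)

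I expect the crux to lie in the first half of the argument for (i): one must be sure that $u=\tu\circ E$ really restricts, over real Diophantine $\om$, to an \emph{honest real} solution of~(\ref{eq:u2}a) --- this is exactly where the reality statement of Lemma~\ref{lem:upties} and the constraint ``$\theta\pm\om\in S_R$'' built into Corollary~\ref{cor:uxg} are used --- and that the resulting invariant curve, being an analytic (in particular Lipschitz) graph, consists of action-minimizing orbits, so that the ergodic-averaging identity~\eqref{eq:averaging} is legitimate for it. The passage to derivatives in (ii) is then comparatively soft, but it genuinely relies on $\Chol$-functions carrying a well-defined continuous derivative even though $A^\RR_M$ has empty interior in~$\R$.
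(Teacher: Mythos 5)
For part (i) your argument is essentially the paper's: over $\om\in A^\RR_M$ the reality statement of Lemma~\ref{lem:upties} makes $u(\cdot,\om)$ a real $1$-periodic solution of~(\ref{eq:u2}a), so $(U,V)$ is a parametrized invariant curve of irrational rotation number, $\big(U(j\om,\om)\big)_{j\in\ZZ}$ is a minimal configuration, and \eqref{eq:averaging} identifies $\be(\om)$ with the integral defining $\bc(\om)$ in \eqref{eq:betilda}. For part (ii), however, you depart from the paper: the paper simply invokes the classical formula $\be'(\om)=\ioi V(\th,\om)\,\partial_\th U(\th,\om)\,d\th$ (see \cite{Siburg}, Theorem 1.3.7-(4)), valid pointwise whenever there is an invariant circle of rotation number $\om$, and compares it with \eqref{eq:biprime}; you instead try to recover $\bc'(\om_0)$ from difference quotients of $\be$ along a sequence $\om_n\in A^\RR_M\setminus\{\om_0\}$ with $\om_n\to\om_0$.

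This is where there is a genuine gap: your argument needs every point of $A^\RR_M$ to be an accumulation point of $A^\RR_M$, i.e.\ that the \emph{real} set $A^\RR_M$ is perfect, and you assert this without proof. The paper never claims it; it only uses (and needs) perfectness of the complex sets $A^\CC_M$ and $K_M$, which is automatic because every $\om_*\in A^\RR_M$ is the apex of a cone contained in $A^\CC_M$. Perfectness of $A^\RR_M$ itself is not obvious and may fail for special values of $M$: the complement of $A^\RR_M$ is the union of the open intervals of radius $\frac{1}{Mm^{2+\tau}}$ about the rationals $\frac{n}{m}$, with a hard threshold, and a point of $A^\RR_M$ lying exactly at an endpoint of one such interval (or at the accumulation of an overlapping chain of them) would be isolated in $A^\RR_M$ while still non-isolated in $A^\CC_M$; the only quantitative information available, $m\big([\om_0,\om_0+1]\setminus A^\RR_M\big)<2\zeta(1+\tau)/M$, does not localize to small neighbourhoods of a given point, so it cannot rule this out. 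At such a point your divided-difference argument produces no conclusion, because $\be$ is only defined on $\RR$ and $\bc$ only on $A^\CC_M$, so there is no sequence along which both quotients make sense. To close the argument you should either prove that $A^\RR_M$ has no isolated points (a separate, nontrivial arithmetic statement), or argue as the paper does, comparing \eqref{eq:biprime} with the known expression for $\be'$ along an invariant circle, which works at every point of $A^\RR_M$ without any accumulation hypothesis.
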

\begin{proof}
For $\om \in A^\RR_M$ the sequence $x_j:=U(j\om,\om)$ defines a minimal configuration $(x_j)_{j\in \ZZ}$ with rotation number $\om$; in fact, setting $y_j:=\om+V(j\om, \om)$ yields $T(x_j,y_j)=(x_{j+1},y_{j+1})$. The proof of (i) then follows from equation \eqref{eq:averaging}.

The proof of (ii) follows from a well known formula (see \cite{Siburg}, Theorem 1.3.7-(4)) which expresses the derivative of Mather's $\be$-function in terms of $U$ and $V$:
$$ \be'(\om)=\ioi V\thom \partial_\th U \thom d\theta$$   
Thus $\be'(\om)=\bc'(\om)$ by equation \eqref{eq:biprime}.
\end{proof}


At this stage, only point~(ii) of Theorem~\ref{th:main} remains to be
proved.
According to Lemma~\ref{lem:upties}, we have
\[
u(\th,-\om) = u(\th,\om) = u(\th,\om+1) = \ov{u(\ov\th,\ov\om)}.
\]
This implies
\[
v(\th,-\om) = u(\th,-\om)-u(\th+\om,-\om) = 
u(\th,\om)-u(\th+\om,\om) = -v(\th+\om,\om)
\]
and $v(\th,\om+1) = v(\th,\om) = \ov{v(\ov\th,\ov\om)}$.
In view of~\eqref{eqdefPhibeC}, this yields
\[
\Phi^\C_\be(-\om) = \Phi^\C_\be(\om) = \Phi^\C_\be(\om+1)
= \ov{\Phi^\C_\be(\ov\om)}
\]
and we are done.


\appendix 

\section{Proof of Lemma~\ref{l:xxx}} \label{appendix}

Let $\psi\in\CK$.
We use the same notations for $\check K$ and~$\check\psi$ as in the
definition of the space $\CK$ given in Section~\ref{secmainresult}.
Notice that 
\[
K\cap\C = E(A) \ens\text{or}\ens E(A)\cup\{0\},
\qquad
\check K = E(-A) \ens\text{or}\ens E(-A)\cup\{0\}
\]
according as $\inf\{Re\,\om \mid \om\in A\}>-\infty$ or not for the former,
and $\sup\{Re\,\om \mid \om\in A\}<+\infty$ or not for the latter.

Let $\ph \defeq \psi \circ E$. Clearly, $\ph$ is bounded and
$\dst \sup_{\om\in A}\norm{\ph(\om)}_B \le \sup_{q\in
  K}\norm{\psi(q)}_B
\le \norm{\psi}_\CK$.
For $(\om,\om')\in A\times A$ with $\om \neq \om'$, we have
\begin{equation}\label{eq:two}
\Om\ph(\om,\om') =
\frac{\psi (q')-\psi (q)}{q'-q}
\frac{q'-q}{\om'-\om} =
\frac{\check{\psi} (\xi')-\check{\psi}(\xi)}{\xi'-\xi} \frac{\xi'-\xi}{\om'-\om}
\end{equation}
with $q \defeq E(\om)$, $q' \defeq E(\om')$, $\xi \defeq E(-\om)$, $\xi' \defeq E(-\om')$.
Letting $\om'$ tend to~$\om$, we get
\[
2\pi i E(\om) \psi'(E(\om)) = - 2\pi i E(-\om) \check\psi'(E(-\om))
\]
and we define both $\Om\ph(\om,\om)$ and $\ph'(\om)$ as this common
value.
This way $\Om\ph$ is continuous on $A\times A$.

Let us write $A=A^+ \cup A^-$, where $A^\pm$ are the overlapping regions
\[
A^+ \defeq \{\om \in A \mid \IM \om >-1\}, \qquad
A^- \defeq \{\om \in A \mid \IM \om <1\}.
\]
If both $\om$ and~$\om'$ belong to $A^+$ (resp.\ $A^-$), then the quantity 
$|\frac{q'-q}{\om'-\om}|$ (resp.\ $|\frac{\xi'-\xi}{\om'-\om}|$) is bounded by
$2\pi e^{2\pi}$, 
hence by the first (resp.\ second) expression in \eqref{eq:two} we get
\beglab{ineqnormOMph}
\norm{\Om\ph(\om, \om')}_B \le 2\pi e^{2\pi} \norm{\psi}_\CK,
\edla
and also $\norm{\ph'(\om)}_B \le 2\pi e^{2\pi} \norm{\psi}_\CK$ by continuity.
If $\om$ and~$\om'$ do not lie in the same region, then 
%
$|\om -\om'|\ge2$, hence 
$\norm{\Om\ph(\om, \om')}_B 
= \norm{\frac{\psi (q')-\psi (q)}{\om'-\om}}_B
\le \norm{\psi}_\CK$.

Therefore, \eqref{ineqnormOMph} always holds true, which completes the proof of our claim.


\vspace{1.truecm}


\begin{thebibliography}{10}
\expandafter\ifx\csname natexlab\endcsname\relax\def\natexlab#1{#1}\fi
\expandafter\ifx\csname bibnamefont\endcsname\relax
  \def\bibnamefont#1{#1}\fi
\expandafter\ifx\csname bibfnamefont\endcsname\relax
  \def\bibfnamefont#1{#1}\fi
\expandafter\ifx\csname citenamefont\endcsname\relax
  \def\citenamefont#1{#1}\fi
\expandafter\ifx\csname url\endcsname\relax
  \def\url#1{\texttt{#1}}\fi
\expandafter\ifx\csname urlprefix\endcsname\relax\def\urlprefix{URL }\fi
\providecommand{\bibinfo}[2]{#2}
\providecommand{\eprint}[2][]{\url{#2}}


\bibitem{Aubry}
Sergey Aubry.
\newblock The twist map, the extended Frenkel--Kontorova model and the devil's staircase. 
\newblock {\em Physica}, 7D: 240--258, 1983.

\bibitem{Bangert}
Victor Bangert.
\newblock Mather sets for twist maps and geodesics on tori. 
\newblock {\em Dynamics reported Ser. Dynam. Systems Appl.}, Vol. 1, 1--56, 1988.


\bibitem{Birkhoff}
George D. Birkhoff.
\newblock Surface transformations and their dynamical applications.
\newblock {\em Acta Math.}, 43: 1-119, 1922.


\bibitem{CMS}
Carlo Carminati, Stefano Marmi, David Sauzin.
\newblock There is one KAM curve.
\newblock \emph{Nonlinearity}, 27: 2035--2062, 2014.


\bibitem{Chir}
B.V.Chirikov.
\newblock A universal instability of many-dimensional oscillator systems.
\newblock {\em Phys. Rep.} 52: 264-379, 1979.

\bibitem{FMS}
F.~Fauvet, F.~Menous, D.~Sauzin.
\newblock Explicit linearization of one-dimensional
germs through tree-expansions.
\newblock  \emph{Bulletin de la Soc. Math. de France}, 146 (1): 81--126, 2018.


\bibitem{He}
Michael R. Herman.
\newblock Simple proofs of local conjugacy theorems for diffeomorphisms of the circle with almost every
rotation number.  
%
\newblock \emph{Bol. Soc. Brasil. Mat.}  \16 (1), 1: 45--83, 1985.



\bibitem{MMP}
R.S.MacKay, J.D.Meiss, I.C.Percival.
\newblock Transport in Hamiltonian systems.
\newblock  {\em Physica D} 13 (1-2): 55--81, 1984.

\bibitem{MP}
R.S.MacKay, I.C.Percival.
\newblock Converse KAM - theory and practice.
\newblock  {\em Comm. Math. Phys.} 94 (4): 469--512, 1985.


\bibitem{MaS}
Stefano Marmi and Jaroslav Stark.
\newblock On the standard map critical function
\newblock \emph{Nonlinearity}, 5 (3): 743--761, 1992.


\bibitem{MarmiS}
Stefano Marmi, David Sauzin.
\newblock Quasianalytic monogenic solutions of a cohomological equation.
\newblock \emph{Mem. Amer. Math. Soc.}, {164}, pp. 83, 2003.


\bibitem{MS2}
Stefano Marmi and David Sauzin.
\newblock A quasianalyticity property for monogenic solutions of small divisor problems,
\newblock
{\em Bulletin of the Brazilian Mathematical Society, New
Series} {42}(1), 45--74, 2011.



\bibitem{MM}
Shahla Marvizi and Richard Melrose.
\newblock Spectral invariants of convex planar regions.
\newblock {\em J. Differential Geom.}, 17 (3): 475--503, 1982.


\bibitem{MS}
Daniel Massart and Alfonso Sorrentino.
\newblock Differentiability of Mather's average action and integrability on closed surfaces.
\newblock {\em Nonlinearity}, 24: 1777--1793, 2011.


\bibitem{Mather82}
John N. Mather.
\newblock Existence of quasi-periodic orbits for twist homeomorphisms of the annulus.  
\newblock {\em Topology} 21: 457-- 467, 1982.


\bibitem{Mather84}
John N. Mather.
\newblock Nonexistence of invariant circles. 
\newblock {\em Ergodic Theory Dynam. Systems} 4 (2): 301--309, 1984.


\bibitem{Mather90}
John N. Mather.
\newblock Differentiability of the minimal average action as a function
              of the rotation number.
\newblock {\em Bol. Soc. Brasil. Mat. (N.S.)} 21: 59--70, 1990.


\bibitem{MatherForni}
John N. Mather and Giovanni Forni.
\newblock Action minimizing orbits in {H}amiltonian systems.
\newblock{\em Transition to chaos in classical and quantum mechanics
              ({M}ontecatini {T}erme, 1991)}, {Lecture Notes in Math.}, {Vol. 1589}: 92--186, 1994.
                           
             
\bibitem{Moser}
J\"urgen Moser.
\newblock Monotone Twist Mappings and the Calculus of Variations.  
\newblock {\em Ergodic Theory Dyn. Syst.}, 6: 401--413, 1986.
             
              


\bibitem{Poschel}
J\"urgen P\"oschel,
\newblock Integrability of Hamiltonian Systems on Cantor Sets,
\newblock {\em Comm. Pure Appl. Math.} 35: 653--696, 1982.



\bibitem{Siburg}
Karl F. Siburg.
\newblock The principle of least action in geometry and dynamics.
\newblock {\em Lecture Notes in Mathematics} Vol.1844, xiii+ 128 pp,
Springer-Verlag, 2004.


\bibitem{Sor14}
Alfonso Sorrentino.
\newblock Computing Mather's $\be$-function for Birkhoff billiards.
\newblock  {\em Discrete and Continuous Dyn. Syst. A}, 35 (10): 5055--5082, 2015.


\bibitem{SorLecNotes}
Alfonso Sorrentino.
\newblock Action-minimizing methods in Hamiltonian dynamics: an introduction to Aubry-Mather theory.
\newblock  {\em Mathematical Notes Series} Vol. 50, Princeton University Press, xii+115 pp., ISBN: 978-0-691-16450-2, 2015.


\bibitem{SVeselov}
Alfonso Sorrentino and Alexander P. Veselov.
\newblock  Markov numbers, Mather’s beta function and stable norm.
\newblock {\em Nonlinearity}, 32 (6): 2147--2156, 2019. 


\bibitem{SV}
Alfonso Sorrentino and Claude Viterbo.
\newblock  Action minimizing properties and distances on the group of Hamiltonian diffeomorphisms.  
\newblock {\em Geom. Topol.}, 14: 2383--2403, 2010.


\bibitem{Tabach}
Serge Tabachnikov.
\newblock Geometry and billiards.
\newblock {\em Student Mathematical Library} Vol.30, xii+ 176 pp, American Mathematical Society, 2005.


\end{thebibliography}
\end{document}